\newtheorem{thm}{Theorem}[section]
\newtheorem{prop}[thm]{Proposition}
\theoremstyle{definition}
\newtheorem{defn}[thm]{Definition}
\newtheorem{lemma}[thm]{Lemma}
\newtheorem{rem}[thm]{Remark}
\newtheorem{rems}[thm]{Remarks}
\newtheorem{conv}[thm]{Convention}
\theoremstyle{remark}
\numberwithin{equation}{section}
\newcommand{\G}{\mathcal{G}}
\newcommand{\M}{\mathcal{M}}
\newcommand{\C}{\mathbb{C}}
\begin{document}
\setlength{\parskip}{5pt}

\title{Connectivity for quantum graphs via quantum adjacency operators}

 %\thanks{This research was supported by the Deutsche Forschungsgemeinschaft (DFG, German Research Foundation) under Germany's Excellence Strategy -- EXC 2044 -- 390685587, Mathematics M\"unster -- Dynamics -- Geometry -- Structure, the Deutsche Forschungsgemeinschaft (DFG, German Research Foundation) -- Project-ID 427320536 -- SFB 1442, and ERC Advanced Grant 834267 -- AMAREC, {\color{red} What is the acknowledgement for WIRE?} National Science Center, Poland (NCN) grant no. 2021/43/D/ST1/01446. The project is co-financed by the Polish National Agency for Academic Exchange within the Polish Returns Programme. \includegraphics[scale=0.25 {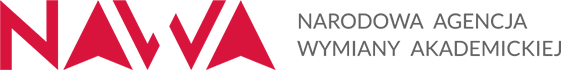}}

\author[K.\ Courtney]{Kristin Courtney}
     \address{Department of Mathematics and Computer Science, University of Southern Denmark\\ Campusvej 55,
5230 Odense M, Denmark}
     \email{kcourtney@imada.sdu.dk}
     
     \author[P.\ Ganesan]{Priyanga Ganesan}
     \address{Department of Mathematics, University of California, San Diego\\ 9500 Gilman Drive, La Jolla, California 92093, USA}
     \email{pganesan@ucsd.edu}

 \author[M.\ Wasilewski]{Mateusz Wasilewski}
     \address{
Institute of Mathematics of the Polish Academy of Sciences, ul. \'Sniadeckich 8, 00-956, Warszawa, Poland}
     \email{mwasilewski@impan.pl}
     
 \keywords{}

\begin{abstract}

Connectivity is a fundamental property of quantum graphs, previously studied in the operator system model for matrix quantum graphs and via graph homomorphisms in the quantum adjacency matrix model. In this paper, we develop an algebraic characterization of connectivity for general quantum graphs within the quantum adjacency matrix framework. Our approach extends earlier results to the non-tracial setting and beyond regular quantum graphs. We utilize a quantum Perron–Frobenius theorem that provides a spectral characterization of connectivity, and we further characterize connectivity in terms of the irreducibility of the quantum adjacency matrix and the nullity of the associated graph Laplacian. These results are obtained using the KMS inner product, which unifies and generalizes existing formulations.
\end{abstract}

\maketitle

\section{Introduction}

In recent years, the theory of \textit{quantum graphs} has garnered 
considerable attention due to its deep connections with 
operator algebras, quantum information theory, quantum groups and non-commutative geometry. 
Broadly speaking, quantum graphs are operator theoretic generalizations of classical graphs and they may be described mainly in two different ways.

The first approach models quantum graphs as \textbf{operator systems} or self-adjoint operator subspaces. 
This framework originated in quantum information theory during the study of zero-error communication, where quantum graphs emerged as \textit{confusability graphs} of quantum channels \cite{DSW}. 
These structures, often called \textit{non-commutative graphs} or \textit{matrix quantum graphs}, are realized as operator systems within matrix algebras. 
Independently, related structures appeared in \cite{Weaver-Kuperberg} in the study of \textit{quantum relations} on non-commutative spaces. Building on this, Weaver \cite{Weaver12} developed  a broader operator-algebraic theory of quantum graphs, generalizing the concept of matrix quantum graphs. In this setting, a quantum graph is defined as an operator system satisfying a specific bimodule property over a von Neumann algebra. The operator system model of quantum graphs has enabled the quantum extension of numerous classical graph-theoretic notions, including, graph parameters such as independence numbers and chromatic numbers \cite{DSW, Stahlke, Boreland1, Boreland2}, graph homomorphisms and graph colorings \cite{BraGanHar, TT, BHTT23, BHTT24}, random graph models \cite{ChirWas} and graph products \cite{Meena} in the quantum setting.

Independently, a second model of quantum graphs was introduced in \cite{MRV}, rooted in categorical studies of \textit{quantum sets and quantum functions}. Here, the vertex set of a quantum graph is modeled by a C*-algebra equipped with a distinguished state, which can be interpreted as the function algebra over a non-commutative space. 
The graph itself is defined by a linear operator on the quantum vertex set, known as a \textbf{quantum adjacency matrix}, which generalizes the properties of an adjacency matrix of a classical graph.  
This perspective facilitates a deeper analysis of quantum graphs using tools from C*-algebras, category theory and non-commutative geometry and connects naturally to quantum group theory and quantum symmetries. This model has since been generalized to other settings \cite{Kari1, Matsuda1, Gromada, Daws}, and further unified under the framework of \textit{categorified graphs} using unitary tensor categories \cite{Roberto}. This quantum adjacency matrix model of quantum graphs has led to interesting examples and classifications of small-dimensional quantum graphs \cite{Matsuda1, Gromada}, results linking to quantum (automorphism) groups \cite{MRV2, Kari1, Daws, Was24}, the development of graph algebras and path spaces associated to a quantum space \cite{Kari3, Lara1, Lara2} and spectral-theoretic insights \cite{Ganesan, Matsuda2}.

The equivalence between the two different perspectives of quantum graphs have been proved using different methods \cite{MRV, ChirWas, Daws} in both the tracial and non-tracial setting. Conceptually, the operator system model may be viewed as a quantization of a graph's \textbf{edge set}, while the quantum adjacency matrix model corresponds to a quantization of its \textbf{adjacency matrix}.  While the two descriptions are essentially equivalent, there are several results that we understand well in one framework but not in the other. A prominent example for this is the notion of \textit{connectivity}, which is better understood in the operator system model, but not so in the quantum adjacency matrix model.

In \cite{Swift}, a notion of connectivity was introduced for non-commutative graphs using the operator system model. 
There, a non-commutative graph $S \subseteq M_n$ is defined to be connected if there exists $m \in \mathbb{N}$ such that $S^m = M_n$. With this definition, the authors showed that various quantum graphs, such as the quantum Hamming cubes and quantum expanders, are connected and they established quantum analogues of classical results such as the tree-packing theorem. 
Alternately, \cite{Matsuda2} developed a definition of connectivity in the quantum adjacency matrix model, using
quantum graph homomorphisms. In this setting, a quantum graph is said to be connected if there exists a certain surjective quantum graph homomorphism into the completely disconnected classical graph on two vertices. Using this notion, 
the author provided algebraic characterizations of connectedness and bipartiteness of regular tracial quantum graphs using the spectrum of the quantum adjacency matrix. Although notions of connectivity have been defined in both models of a quantum graph, the definition in the operator system model is more tractable than the other. However, both notions of connectivity remain limited in scope, and the relation between the two notions has been unclear.
In particular, the results in \cite{Swift} focus only on matrix quantum graphs, while the results in \cite{Matsuda2} are confined to regular quantum graphs in the tracial setting. 

In this paper, we bridge this gap by introducing an \textbf{algebraic definition of connectivity} in the quantum adjacency matrix model. We prove that this formulation is equivalent to the various existing notions of connectivity and provides a unified approach to connectivity for general quantum graphs. Our approach extends the results in \cite{Matsuda2} to non-tracial settings and beyond the case of regular quantum graphs. In particular, we present a quantum Perron-Frobenius theorem for quantum graphs, thus providing a spectral characterization of connectivity for general quantum graphs. Our results are achieved via the use of the KMS inner product on the quantum set, instead of the more commonly used GNS inner product. This choice enables us to formulate and prove results in the non-tracial setting and also yields simplified proofs for several known results. We also study bipartite quantum graphs, generalizing results from \cite{Matsuda2} to nonregular and nontracial cases and answer a question from \cite{Matsuda2} about the operator norm of a $d$-regular quantum graph.

The paper is organized as follows. In section 2, we introduce some preliminaries about quantum graphs, KMS structure and introduce some lemmas about irreducible maps for future use. In section 3, we present an algebraic definition of connectivity and prove that it is equivalent to various other notions of connectivity in the tracial setting. We also characterize connectivity in terms of irreducibility of the quantum adjacency matrix and show that most (random) quantum graphs are connected. In section 4, we focus on the non-tracial setting and prove the results using KMS implementations. Here, we also show a characterization of connectivity in terms of the nullity of the graph Laplacian. We also discuss bipartite quantum graphs and operator norm of a quantum adjacency matrix in this section.

\section{Preliminaries}\label{sec:prelim}

Let $\M$ be a finite dimensional C*-algebra equipped with a faithful positive functional $\psi$. If $\M \simeq \bigoplus_{a=1}^{d} M_{n_{a}}$ then $\psi$ is given by $\psi = \bigoplus_{a=1}^{d} \mathrm{Tr}(\rho_{a} \cdot)$ (where $\rho_a\in M_{n_a}$ is a positive definite matrix such that $\psi|_{M_{n_a}}=\mathrm{Tr}(\rho_a\cdot)$). In this context we can define the \emph{modular group} of $\psi$ to be $\sigma_z(\bigoplus_{a=1}^{d}x_a) := \bigoplus_{a=1}^{d} \rho_{a}^{iz} x_a \rho_{a}^{-iz}$ for any $z\in \C$.

Using $\psi$ we can define two kinds of inner products on $\M$, the usual GNS inner product $\langle x, y\rangle:= \psi(x^{\ast} y)$ and the KMS inner product $\langle x, y\rangle_{\mathrm{KMS}}:= \psi(x^{\ast} \sigma_{-\frac{i}{2}}(y))$. There are more explicit formulas available in our case, namely
\begin{align*}
\langle \bigoplus_{a=1}^{d} x_a, \bigoplus_{a=1}^{d}y_a\rangle &= \sum_{a=1}^{d} \Tr(\rho_{a} x_{a}^{\ast} y_{a}) \\
\langle \bigoplus_{a=1}^{d} x_a, \bigoplus_{a=1}^{d}y_a\rangle_{\mathrm{KMS}} &= \sum_{a=1}^{d} \Tr(\rho_{a}^{\frac{1}{2}} x_{a}^{\ast} \rho_{a}^{\frac{1}{2}} y_{a}) = \sum_{a=1}^{d} \Tr((\rho_{a}^{\frac{1}{4}} x_{a} \rho_{a}^{\frac{1}{4}})^{\ast} \rho_{a}^{\frac{1}{4}} y_{a} \rho_{a}^{\frac{1}{4}}). 
\end{align*}

We will discuss the usefulness of the KMS inner product in Section \ref{Sec:nontracial}.

From now on we assume that $\M$ is equipped with a $1$-form $\psi$, i.e. a faithful positive functional such that $mm^{\ast}=id$, where $m: \M \otimes \M \to \M$ is the multiplication map and $m^{\ast}: \M \to \M \otimes \M$ is its adjoint with respect to the GNS inner product coming from $\psi$. Note that $\sigma_z$ is a multiplicative map for $z\in C$, so $m \circ(\sigma_z \otimes \sigma_z) = \sigma_z \circ m$, that is $m$ commutes with the respective modular groups of $\M\otimes \M$ and $\M$, hence the its adjoint is the same with respect to either the GNS or the KMS inner product. Such a pair $(\M, \psi)$ is sometimes called a \emph{quantum space}. If $\M \simeq \bigoplus_{a=1}^{d} M_{n_{a}}$ and $\psi$ is given by $\psi = \bigoplus_{a=1}^{d} \mathrm{Tr}(\rho_{a} \cdot)$, then the condition $mm^{\ast} = id$ is equivalent to $\mathrm{Tr}(\rho_{a}^{-1}) = 1$ for all $a\in \{1,\dots, d\}$. One can verify by a direct computation (see, e.g., \cite[Proposition 2.3]{Was24}) that $m^{\ast}(e_{ij}^a)=\sum_{k=1}^{n_a}e^a_{ik}\rho_a^{-1}\otimes e^a_{kj}$ where $e_{ij}^a\in M_{n_a}$ are the matrix units. 

Using the maps $m$ and $m^{\ast}$ we can define the \emph{quantum Schur product} of linear maps on $\M$, namely if $A,B: \M \to \M$ then we define $A\bullet B:= m(A\otimes B)m^{\ast}$. If $\M = \C^n$ then this is the usual Schur product, which we will denote by $\odot$, i.e. $(S\odot T)_{ij}:= S_{ij} T_{ij}$ for two matrices $S,T \in M_n$.

\begin{defn}[\cite{MRV}]
A quantum graph on $(\M, \psi)$ is defined by a linear operator $A: \mathcal{M} \to \mathcal{M}$, which is a quantum Schur idempotent, that is it satisfies
\[ 
m(A \otimes A)m^* = A, 
\]
 We call $A$ a \textit{quantum adjacency matrix}. The triple $\G:= (\M, \psi, A)$ will be called a \textit{quantum graph}.

 The quantum graph $\G$ is said to be 
\begin{itemize}
\item \emph{Tracial} if $\psi$ is tracial;
\item \emph{Real} if $A$ is a *-preserving map on $\M$.
\item \emph{Undirected} if $A$ is self-adjoint with respect to the KMS inner product (in the tracial case it coincides with the GNS inner product). We will often write that $A$ is KMS-symmetric.
\item \emph{Reflexive} if $A \bullet id = id$ and \emph{irreflexive} if $A \bullet id = 0$.
\end{itemize}
We will assume that our quantum graphs are real.
\end{defn}

There is another way to encode a linear map on $\M$, namely by using its Choi matrix, which is an element of $\M \otimes \M^{\mathrm{op}}$.
\begin{defn}\label{def:Choi}
Let $\G = (\M, \psi, A)$ be a quantum graph. We call $\mathrm{Choi}(A):= (A \otimes \sigma_{-\frac{i}{2}})m^{\ast}(\mathds{1}) \in \M \otimes \M^{\mathrm{op}}$ the \emph{Choi matrix} of $A$. We can write it more explicitly as (see \cite[Lemma 3.3]{Was24})
\[
\mathrm{Choi}(A) = \sum_{a=1}^{d}\sum_{i,j=1}^{n_{a}} A(\rho_{a}^{-\frac{1}{4}}e_{ij}^{a} \rho_{a}^{-\frac{1}{4}}) \otimes (\rho_{a}^{-\frac{1}{4}}e_{ji}^{a} \rho_{a}^{-\frac{1}{4}})^{\mathrm{op}}.
\]
\end{defn}
we will summarize here the most important properties of the Choi matrix.
\begin{prop}[{\cite[Proposition 3.7]{Was24}}]\label{prop:Choiproperties}
Let $A,B: \M \to \M$ be linear maps, where $\M$ is equipped with a $1$-form $\psi$.

\begin{itemize}
\item $\mathrm{Choi}(A\bullet B) = \mathrm{Choi}(A) \mathrm{Choi}(B)$.
\item $\mathrm{Choi}(A)$ is self-adjoint if and only if $A$ is a $\ast$-preserving map.
\item $\mathrm{Choi}(A)$ is positive if and only if $A$ is a completely positive map.
\item $\mathrm{Choi}(A) = \Sigma (\mathrm{Choi}(A))$, where $\Sigma(a\otimes b^{\mathrm{op}}):= b\otimes a^{\mathrm{op}}$ is the tensor flip, if and only if $A = A^{T}$, where the transpose is defined as $A^{T}(x) := (A^{\ast}_{\mathrm{KMS}}(x^{\ast}))^{\ast}$
\end{itemize}
\end{prop}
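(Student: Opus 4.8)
The plan is to reduce every claim to a short computation in a fixed orthonormal basis of $\M$ for the KMS inner product, together with the classical finite-dimensional Choi theorem for the single implication that is not purely formal. Choose the matrix units $e^a_{ij}$ so that each $\rho_a=\mathrm{diag}(\lambda^a_1,\dots,\lambda^a_{n_a})$ is diagonal, and set $f^a_{ij}:=\rho_a^{-1/4}e^a_{ij}\rho_a^{-1/4}$. Using the explicit KMS formula one checks that $\langle f^a_{ij},f^b_{kl}\rangle_{\mathrm{KMS}}=\delta_{ab}\delta_{ik}\delta_{jl}$, so $\{f^a_{ij}\}$ is a KMS-orthonormal basis, with $(f^a_{ij})^{\ast}=f^a_{ji}$; and from $m^{\ast}(e^a_{ij})=\sum_k e^a_{ik}\rho_a^{-1}\otimes e^a_{kj}$ together with the $1$-form condition $\sum_k(\lambda^a_k)^{-1}=\mathrm{Tr}(\rho_a^{-1})=1$ one obtains the structure constants
\[
m^{\ast}(f^a_{ij})=\sum_k(\lambda^a_k)^{-1/2}\,f^a_{ik}\otimes f^a_{kj},\qquad f^a_{ij}f^b_{kl}=\delta_{ab}\,\delta_{jk}\,(\lambda^a_j)^{-1/2}\,f^a_{il}.
\]
In this basis Definition~\ref{def:Choi} reads $\mathrm{Choi}(A)=\sum_{a,i,j}A(f^a_{ij})\otimes(f^a_{ji})^{\mathrm{op}}$, and since $\{f^a_{ji}\}_{a,i,j}$ is a basis of $\M$ and hence of $\M^{\mathrm{op}}$, the assignment $A\mapsto\mathrm{Choi}(A)$ is a linear bijection from the space of linear maps $\M\to\M$ onto $\M\otimes\M^{\mathrm{op}}$.

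For the first bullet I would expand $\mathrm{Choi}(A)\,\mathrm{Choi}(B)$, using that multiplication in $\M\otimes\M^{\mathrm{op}}$ reverses the second leg, $(x\otimes y^{\mathrm{op}})(x'\otimes y'^{\mathrm{op}})=xx'\otimes(y'y)^{\mathrm{op}}$, and collapse the resulting sum with the second structure constant above; comparing with $\mathrm{Choi}(A\bullet B)=\sum_{a,i,j}(A\bullet B)(f^a_{ij})\otimes(f^a_{ji})^{\mathrm{op}}$ then reduces the claim to the identity $(A\bullet B)(f^a_{ij})=\sum_k(\lambda^a_k)^{-1/2}A(f^a_{ik})B(f^a_{kj})$, which is exactly $m(A\otimes B)m^{\ast}$ evaluated at $f^a_{ij}$ via the first structure constant. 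The second bullet is immediate: since the involution of $\M^{\mathrm{op}}$ agrees with that of $\M$ and $(f^a_{ji})^{\ast}=f^a_{ij}$, one has $\mathrm{Choi}(A)^{\ast}=\sum_{a,i,j}A(f^a_{ij})^{\ast}\otimes(f^a_{ij})^{\mathrm{op}}$, and this equals $\mathrm{Choi}(A)$ if and only if $A(f^a_{ij})=A(f^a_{ji})^{\ast}=A\big((f^a_{ij})^{\ast}\big)^{\ast}$ for all $a,i,j$, i.e.\ if and only if $A$ is $\ast$-preserving.

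The third bullet has one formal half and one that invokes classical Choi. Writing $\Omega:=\mathrm{Choi}(\mathrm{id})=\sum_{a,i,j}f^a_{ij}\otimes(f^a_{ji})^{\mathrm{op}}$, the structure constants and $\sum_k(\lambda^a_k)^{-1}=1$ give $\Omega^2=\Omega$, and clearly $\Omega^{\ast}=\Omega$, so $\Omega$ is a projection and in particular positive; since $\mathrm{Choi}(A)=(A\otimes\mathrm{id}_{\M^{\mathrm{op}}})\Omega$ and $A\otimes\mathrm{id}_{\M^{\mathrm{op}}}$ is completely positive whenever $A$ is, complete positivity of $A$ forces $\mathrm{Choi}(A)\ge0$. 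For the converse, decompose $\mathrm{Choi}(A)=\bigoplus_a C_a$ with $C_a=\sum_{i,j}A(f^a_{ij})\otimes(f^a_{ji})^{\mathrm{op}}\in\M\otimes M^{\mathrm{op}}_{n_a}$; after identifying $M^{\mathrm{op}}_{n_a}\cong M_{n_a}$ via transpose, $C_a$ becomes the conjugate, by the positive invertible $\mathrm{id}\otimes\rho_a^{-1/2}$, of the ordinary unweighted Choi matrix $\sum_{i,j}A(e^a_{ij})\otimes e^a_{ij}$ of the restriction $A|_{M_{n_a}}\colon M_{n_a}\to\M$, so that $C_a\ge0$ is equivalent by the classical Choi theorem to complete positivity of $A|_{M_{n_a}}$; and $A$ is completely positive if and only if each $A|_{M_{n_a}}$ is, since the block projections $\M\to M_{n_a}$ are $\ast$-homomorphisms. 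As $\mathrm{Choi}(A)\ge0$ if and only if every $C_a\ge0$, this completes the bullet.

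For the last bullet I would show $\mathrm{Choi}(A^T)=\Sigma(\mathrm{Choi}(A))$ for every $A$; by injectivity of $\mathrm{Choi}$ this is equivalent to the assertion. Expanding both sides in the KMS-basis with $\alpha^{b,a}_{kl,ij}:=\langle f^b_{kl},A f^a_{ij}\rangle_{\mathrm{KMS}}$, one finds $\Sigma(\mathrm{Choi}(A))=\sum\alpha^{b,a}_{kl,ij}\,f^a_{ji}\otimes(f^b_{kl})^{\mathrm{op}}$, while using $\langle f^b_{kl},A^{\ast}_{\mathrm{KMS}}f^a_{ij}\rangle_{\mathrm{KMS}}=\overline{\alpha^{a,b}_{ij,kl}}$ and $(f^a_{ij})^{\ast}=f^a_{ji}$ one finds $\mathrm{Choi}(A^T)=\sum\alpha^{a,b}_{ji,kl}\,f^b_{lk}\otimes(f^a_{ji})^{\mathrm{op}}$; relabelling the summation indices identifies these two expressions. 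The main obstacle is the converse half of the third bullet: unlike the other items, which are essentially bookkeeping with the structure constants, there one must carefully untangle the $\rho$-weights and the identification $M^{\mathrm{op}}_{n_a}\cong M_{n_a}$ so that positivity is manifestly preserved, and then invoke and reassemble the classical Choi theorem block by block. The flip in the definition of $A^T$ is a genuine feature of the underlying Frobenius structure, but it emerges automatically from the index relabelling in the last step.
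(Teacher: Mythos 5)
Your argument is correct. Note, however, that the paper does not actually prove this proposition: it is imported verbatim from \cite[Proposition 3.7]{Was24}, so there is no in-text proof to compare against; what you have written is a self-contained coordinate verification. All the computations check out: the $f^a_{ij}=\rho_a^{-1/4}e^a_{ij}\rho_a^{-1/4}$ do form a KMS-orthonormal basis with $(f^a_{ij})^{\ast}=f^a_{ji}$, your two structure constants are right, and the multiplicativity, self-adjointness, and transpose bullets follow exactly as you say (the idempotence $\Omega^2=\Omega$ is just the first bullet applied to $id\bullet id=mm^{\ast}=id$, which is the $1$-form condition). The only step that deserves to be written out rather than asserted is the converse half of the positivity bullet: after applying the $\ast$-isomorphism $y^{\mathrm{op}}\mapsto y^{T}$ on the second leg (using that $\rho_a$ is diagonal, so $(f^a_{ji})^{T}=f^a_{ij}$), the key identity is
\[
\sum_{i,j} f^a_{ij}\otimes f^a_{ij}=(\mathds{1}\otimes\rho_a^{-1/2})\Big(\sum_{i,j}e^a_{ij}\otimes e^a_{ij}\Big)(\mathds{1}\otimes\rho_a^{-1/2}),
\]
which holds because both sides equal $\sum_{i,j}(\lambda^a_i\lambda^a_j)^{-1/2}e^a_{ij}\otimes e^a_{ij}$; since $A\otimes id$ acts on the first leg and the conjugation on the second, they commute, and $C_a$ is indeed positive iff the unweighted Choi matrix of $A|_{M_{n_a}}$ is. With that identity made explicit, the reduction to the classical Choi theorem block by block is complete and the whole proof stands.
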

\begin{rem} Reality of $A$ is equivalent to complete positivity, because the Choi matrix $P:= \mathrm{Choi}(A)$ is an idempotent, so if it is self-adjoint, then $P = P^{\ast}P\geqslant 0$, hence $A$ has to be completely positive.
\end{rem}

There is another description of quantum graphs in terms of quantum relations introduced in \cite{Weaver12}. To wit, whenever $\M$ is faithfully represented inside $B(H)$, then quantum relations are described by weak${^\ast}$ closed $\M'$-bimodules inside $B(H)$; in a precise sense, it does not depend on the choice of the representation of $\M$. We will use an explicit correspondence from \cite[Proposition 3.30]{Was24}.
\begin{prop}\label{prop:qadjacencyfrombimodule}
Let $\M \simeq \bigoplus_{a=1}^{d} M_{n_{a}}$ be equipped with a $1$-form $\psi = \bigoplus_{a=1}^{d} \mathrm{Tr}(\rho_{a} \cdot)$ and represented inside $B(H)$, where $H := \bigoplus_{a=1}^{d} \mathbb{C}^{n_a}$. Then any $\M'$-bimodule $S$ is a collection of subspaces $S_{ab} \subset B(\mathbb{C}^{n_{a}}, \mathbb{C}^{n_{b}})$. The corresponding quantum adjacency matrix $A: \M \to \M$ can be written as $\bigoplus_{a,b=1}^{d} A_{ab}$, where $A_{ab}: M_{n_{a}} \to M_{n_{b}}$ is given by
\[
A_{ab} (x) = \sum_{i} \rho_{b}^{-\frac{1}{4}} S^{i}_{ab} \rho_{a}^{\frac{1}{4}} x \rho_{a}^{\frac{1}{4}} (S^{i}_{ab})^{\ast} \rho_{b}^{-\frac{1}{4}},
\]
where $(S^{i}_{ab})_{i}$ is an orthonormal basis of $S_{ab}$ with respect to the KMS inner product induced by $\psi^{-1}$ (see \cite{Was24} for more details). Note that in the tracial case the expression simplifies significantly giving a Kraus decomposition.
\end{prop}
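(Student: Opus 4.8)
The plan is to handle the two assertions separately, and to route the formula for $A$ through the Choi matrix.

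The decomposition $S=\bigoplus_{a,b}S_{ab}$ is immediate. Since $\M\simeq\bigoplus_{a=1}^{d}M_{n_{a}}$ acts block-diagonally on $H=\bigoplus_{a=1}^{d}\C^{n_{a}}$, its commutant is $\M'=\bigoplus_{a=1}^{d}\C p_{a}$, where $p_{a}\in B(H)$ is the orthogonal projection onto the $a$-th summand. Each $p_{a}$ lies in $\M'$, so for an $\M'$-bimodule $S$ one has $p_{b}Sp_{a}\subseteq S$ for all $a,b$; since $\sum_{a}p_{a}=\unit$ and $p_{b}B(H)p_{a}=B(\C^{n_{a}},\C^{n_{b}})$, this gives $S=\bigoplus_{a,b}S_{ab}$ with $S_{ab}:=p_{b}Sp_{a}$. (Weak$^{\ast}$-closedness is automatic in finite dimensions.)

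For the formula I would use Proposition \ref{prop:Choiproperties}: a real quantum Schur idempotent $A$ on $(\M,\psi)$ carries the same information as a self-adjoint idempotent --- hence a projection --- $P:=\mathrm{Choi}(A)\in\M\otimes\M^{\mathrm{op}}=\bigoplus_{a,b}M_{n_{a}}\otimes M_{n_{b}}^{\mathrm{op}}$, and under the correspondence between quantum relations and quantum adjacency matrices (\cite{Weaver12,Was24}) the $\M'$-bimodule attached to $A$ is extracted from $P$ block by block. Concretely, the steps I would carry out are: (1) writing $A=\bigoplus_{a,b}A_{ab}$ with $A_{ab}\colon M_{n_{a}}\to M_{n_{b}}$, use Definition \ref{def:Choi} to isolate the summand $P_{ab}$ of $P$ that feeds into $A_{ab}$, which lives in $M_{n_{b}}\otimes M_{n_{a}}^{\mathrm{op}}$; (2) invoke the standard $\ast$-isomorphism $M_{n_{b}}\otimes M_{n_{a}}^{\mathrm{op}}\cong B(\mathcal K_{ab})$, where $\mathcal K_{ab}$ is $B(\C^{n_{a}},\C^{n_{b}})$ with the Hilbert--Schmidt inner product, followed by a modular twist by suitable powers of $\rho_{a}$ and $\rho_{b}$ identifying $\mathcal K_{ab}$ isometrically with $\mathcal H_{ab}$, the same space carrying the KMS inner product of $\psi^{-1}$; (3) check that under this chain of identifications $P_{ab}$ becomes exactly the orthogonal projection of $\mathcal H_{ab}$ onto $S_{ab}$; (4) expand that projection as $\sum_{i}|S^{i}_{ab}\rangle\langle S^{i}_{ab}|$ for a KMS-orthonormal basis $(S^{i}_{ab})_{i}$ of $S_{ab}$, transport each rank-one term back to $M_{n_{b}}\otimes M_{n_{a}}^{\mathrm{op}}$, and invert the Choi map --- using Definition \ref{def:Choi} and the formula $m^{\ast}(e^{a}_{ij})=\sum_{k}e^{a}_{ik}\rho_{a}^{-1}\otimes e^{a}_{kj}$ --- to read off the displayed expression for $A_{ab}$. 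Independence of $A_{ab}$ from the chosen basis is then automatic, since $\sum_{i}|S^{i}_{ab}\rangle\langle S^{i}_{ab}|$ is.

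The step I expect to be the real work is (2)--(4) taken together as a \emph{weight-bookkeeping} problem: one must verify that the modular factors coming from the three inner products in play --- the $\psi^{-1}$-KMS pairing normalising $(S^{i}_{ab})_{i}$, the Hilbert--Schmidt pairing implicit in $M_{n_{b}}\otimes M_{n_{a}}^{\mathrm{op}}\cong B(\mathcal K_{ab})$, and the $\rho^{-1/4}$'s built into $\mathrm{Choi}(\cdot)$ --- combine to leave precisely the weights $\rho_{b}^{-1/4}(\,\cdot\,)\rho_{a}^{1/4}$ on the left and $\rho_{a}^{1/4}(\,\cdot\,)(S^{i}_{ab})^{\ast}\rho_{b}^{-1/4}$ on the right, with no residual $\rho$'s. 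A useful consistency check is the tracial case $\rho_{a}=n_{a}\unit_{n_{a}}$: there the $\psi^{-1}$-KMS inner product on $B(\C^{n_{a}},\C^{n_{b}})$ is $\langle S,T\rangle=(n_{a}n_{b})^{-1/2}\mathrm{Tr}(S^{\ast}T)$, all the weights collapse to scalars, and the formula reduces to the familiar Kraus form $A_{ab}(x)=n_{a}\sum_{i}W^{i}_{ab}\,x\,(W^{i}_{ab})^{\ast}$ for a Hilbert--Schmidt-orthonormal basis $(W^{i}_{ab})_{i}$ of $S_{ab}$.
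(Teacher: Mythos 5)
First, note that the paper does not prove this proposition at all: it is imported verbatim as \cite[Proposition 3.30]{Was24}, so there is no in-paper argument to compare against. Judged on its own terms, your first assertion is fine: the identification $\M'=\mathrm{span}\{\mathds{1}_{n_a}\}$ in this representation and the resulting block decomposition $S=\bigoplus_{a,b}p_bSp_a$ is exactly the argument the paper itself uses later (in Lemma \ref{lem:projectioninalgebra}), and your write-up of it is complete and correct. Your overall strategy for the formula --- view $\mathrm{Choi}(A)$ as a self-adjoint idempotent, hence a projection in $\M\otimes\M^{\mathrm{op}}$, identify each block $M_{n_b}\otimes M_{n_a}^{\mathrm{op}}$ with operators on $B(\C^{n_a},\C^{n_b})$, recognize the projection as the orthogonal projection onto $S_{ab}$ for the right inner product, and expand in an orthonormal basis --- is also the route taken in \cite{Was24}, and your tracial sanity check is internally consistent with the displayed formula.

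The gap is that steps (2)--(4), which you yourself identify as ``the real work,'' are announced rather than carried out, and they are precisely where the content of the proposition lives. The proposition is not the qualitative statement that $A$ admits some Kraus-type decomposition indexed by a basis of $S_{ab}$; it is the exact placement of the modular weights $\rho_b^{-1/4}(\cdot)$ and $(\cdot)\rho_a^{1/4}$ and the exact normalization (the KMS inner product of $\psi^{-1}$, not of $\psi$, and not the Hilbert--Schmidt one). There is a genuine pitfall here: the natural $\ast$-isomorphism $M_{n_b}\otimes M_{n_a}^{\mathrm{op}}\cong B(\mathcal K_{ab})$ carries the Choi projection to the orthogonal projection onto a \emph{modular twist} of $S_{ab}$ (something of the form $\rho_b^{s}S_{ab}\rho_a^{t}$) unless the inner product on $\mathcal K_{ab}$ is chosen to compensate; verifying that the compensating inner product is exactly the $\psi^{-1}$-KMS one, and that no residual powers of $\rho$ survive, requires an explicit computation with $m^{\ast}(e^a_{ij})=\sum_k e^a_{ik}\rho_a^{-1}\otimes e^a_{kj}$ and the $\rho^{-1/4}$'s in Definition \ref{def:Choi}. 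Until that computation is written down, the proposal establishes the shape of the formula but not the formula itself, so it should be regarded as a correct plan with its central step missing rather than a proof.
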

 
\begin{conv}
A projection $p \in \M$ refers to an element satisfying $p = p^2  = p^*$; we say it is non-trivial if $p\neq 0$ and $p\neq \mathds{1}$. The left multiplication by $p$ in $B(L^2(\M, \psi))$ is denoted by $L_p$ and is also a projection. The right multiplication will be denoted by $R_{p}$.
\end{conv}

We record here the following lemma for further use.
\begin{lemma}\label{Lem:irred}
Let $\Phi: \M \to \M$ be a completely positive map. Fix a non-trivial projection $p \in \M$. The following are equivalent:
\begin{enumerate}
\item\label{irr1} $\Phi(p) \leqslant Cp$ for some constant $C>0$, i.e. $\Phi$ is \emph{reducible};
\item\label{irr2} $\Phi(p\M p) \subset p\M p$;
\item\label{irr3} $\Phi(xp) = \Phi(xp)p$;
\item\label{irr4} $\Phi(px) = p \Phi(px)$;
\item\label{irr5} $\Phi(p) (1-p) = 0$.
\end{enumerate}
If $\M$ is equipped with a faithful, tracial positive functional and $\Phi$ is self-adjoint with respect to this functional then any of these conditions is equivalent to $\Phi(xp) = \Phi(x)p$ or, equivalently, $\Phi(px) = p \Phi(x)$.
\end{lemma}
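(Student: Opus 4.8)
The plan is to establish the cycle of implications $(\ref{irr1}) \Rightarrow (\ref{irr2}) \Rightarrow (\ref{irr3}) \Rightarrow (\ref{irr5}) \Rightarrow (\ref{irr1})$, then close the loop with $(\ref{irr3}) \Leftrightarrow (\ref{irr4})$ via a symmetry argument, and finally treat the tracial addendum separately. The backbone of every step is the following standard fact about completely positive maps: if $\Phi$ is CP and $0 \leqslant a \leqslant Cb$, then applying a Stinespring/Kraus decomposition $\Phi(x) = \sum_k V_k x V_k^*$ (which exists since $\M$ is finite-dimensional), we get $\Phi(a) \leqslant C\Phi(b)$; moreover a Kadison–Schwarz-type inequality gives that if $q$ is a projection with $\Phi(q)(1-q) = 0$ then $\Phi(qxq) = q\Phi(qxq)q$ and, using the operator inequality $\Phi(x^*x) \geqslant \Phi(x)^*\Phi(\mathds 1)^{-1}\Phi(x)$ on the support, the ``one-sided'' absorption $\Phi(xq) = \Phi(xq)q$ follows from $\Phi(q)(1-q) = 0$. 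I would package these as the workhorse: write $x q = (xq)q$, note $(1-q)(qx^*)(xq)(1-q) \leqslant \|x\|^2 (1-q) q (1-q) = 0$ is the wrong direction, so instead use the Schwarz inequality $\Phi(yq)^*\Phi(yq) \leqslant \|\Phi(\mathds 1)\| \, \Phi(q y^* y q) \leqslant \|\Phi(\mathds 1)\|\|y\|^2 \Phi(q)$, whence right-multiplying by $(1-p)$ and using $\Phi(p)(1-p) = 0$ kills it.

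For the concrete steps: $(\ref{irr1}) \Rightarrow (\ref{irr5})$ is immediate since $0 \leqslant \Phi(p) \leqslant Cp$ forces $(1-p)\Phi(p)(1-p) = 0$, hence $\Phi(p)^{1/2}(1-p) = 0$, hence $\Phi(p)(1-p) = 0$. The implication $(\ref{irr5}) \Rightarrow (\ref{irr3})$ is the Schwarz-inequality argument above applied with $y = x$: $\big(\Phi(xp)(1-p)\big)^*\big(\Phi(xp)(1-p)\big) \leqslant \|\Phi(\mathds 1)\|\,(1-p)\Phi(px^*xp)(1-p) \leqslant \|\Phi(\mathds1)\|\|x\|^2 (1-p)\Phi(p)(1-p) = 0$. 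Then $(\ref{irr3}) \Rightarrow (\ref{irr2})$ is trivial (take $x \in p\M p$, so $\Phi(x) = \Phi(xp) = \Phi(xp)p = \Phi(x)p$, and symmetrically on the left using that $x = px$ too, or invoke $(\ref{irr4})$), and $(\ref{irr2}) \Rightarrow (\ref{irr1})$ follows by taking $x = p$: $\Phi(p) \in p\M p$ is a positive element of the corner, so $\Phi(p) \leqslant \|\Phi(p)\| p$. For $(\ref{irr3}) \Leftrightarrow (\ref{irr4})$: apply $(\ref{irr3})$ to the map $\Phi^\flat(x) := \Phi(x^*)^*$, which is again completely positive, and observe that $\Phi^\flat(xp) = \Phi^\flat(xp)p$ unravels to $\Phi(px^*) = p\Phi(px^*)$; since $x$ ranges over all of $\M$ this is exactly $(\ref{irr4})$. (Equivalently, conditions $(\ref{irr3})$, $(\ref{irr4})$, $(\ref{irr5})$ are each self-dual or dual to each other under $\Phi \mapsto \Phi^\flat$, which one can exploit to shorten the cycle.)

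For the tracial addendum, suppose $\psi$ is faithful and tracial and $\Phi = \Phi^*$ with respect to $\langle x, y\rangle = \psi(x^*y)$. The new content beyond $(\ref{irr3})$ is upgrading $\Phi(xp) = \Phi(xp)p$ to $\Phi(xp) = \Phi(x)p$, i.e.\ showing $\Phi((1-p)x(1-p)\cdot\text{stuff})$ — more precisely showing $\Phi(x(1-p))p = 0$. By self-adjointness, $\psi\big(\Phi(x(1-p))p\,y\big) = \psi\big(x(1-p)\,\Phi(py)\big)$ for all $y$; by $(\ref{irr4})$ (which we have, since it is equivalent to $(\ref{irr3})$) we know $\Phi(py) = p\Phi(py)$, so $(1-p)\Phi(py) = 0$, and thus $(1-p)\cdot\Phi(py) = 0$ gives $\psi\big(x(1-p)\Phi(py)\big) = \psi\big(x(1-p)p\Phi(py)\big) = 0$. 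Since $\psi$ is faithful and $y$ is arbitrary, $\Phi(x(1-p))p = 0$, i.e.\ $\Phi(xp)p = \Phi(x)p$; combined with $(\ref{irr3})$ this yields $\Phi(xp) = \Phi(x)p$. The left-sided identity $\Phi(px) = p\Phi(x)$ follows by the same argument with roles of left and right swapped (using traciality so that $\Phi^*$ with respect to $\langle\cdot,\cdot\rangle$ still behaves symmetrically), or by applying the right-sided identity to $\Phi^\flat$.

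The main obstacle I anticipate is getting the operator inequalities pointed in the correct direction when passing between the projection $p$ and its complement — the Kadison–Schwarz inequality $\Phi(y)^*\Phi(y) \leqslant \|\Phi(\mathds 1)\|\,\Phi(y^*y)$ requires care about whether $\Phi(\mathds 1)$ is invertible (it is, on its support projection, which suffices since everything in sight lives in that support) and about the fact that $\Phi$ is only assumed CP, not unital or trace-preserving, so constants like $\|\Phi(\mathds 1)\|$ must be carried along. The rest is bookkeeping; no deep input beyond finite-dimensional CP map theory is needed, and the KMS/GNS distinction does not intervene in this lemma since the tracial hypothesis is only invoked in the final sentence.
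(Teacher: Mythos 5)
Your proposal is correct and follows essentially the same route as the paper: (1) $\Rightarrow$ (5) by positivity, (5) $\Rightarrow$ (3) via the Kadison--Schwarz inequality $\Phi(xp)^{\ast}\Phi(xp)\leqslant \|\Phi(\mathds{1})\|\|x\|^{2}\Phi(p)$, (3) $\Leftrightarrow$ (4) by $\ast$-preservation (your $\Phi^{\flat}$ equals $\Phi$ since CP maps are $\ast$-preserving), then (2) and back to (1) by taking $x=p$. Your tracial addendum, pairing against test elements with $\psi$, is just the element-wise version of the paper's operator identity $\Phi L_{p}=L_{p}\Phi L_{p}=(\Phi L_{p})^{\ast}=L_{p}\Phi$, so no substantive difference there either.
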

\begin{proof}
We start with \eqref{irr1}. If $\Phi(p) \leqslant Cp$ then $(1-p) \Phi(p) (1-p)  =0$, from which we infer that $\sqrt{\Phi(p)}(1-p) = 0$, hence $\Phi(p)(1-p)=0$, i.e. \eqref{irr5} holds. It follows from the Kadison-Schwarz inequality that $(\Phi(xp))^{\ast} \Phi(xp) \leqslant \|\Phi(\mathds{1})\| \Phi(px^{\ast}xp) \leqslant  \|\Phi(\mathds{1})\|\|x\|^2 \Phi(p)$. From $\Phi(p)(1-p)=0$ it follows that $(1-p) (\Phi(xp))^{\ast} \Phi(xp) (1-p)=0$, hence $\Phi(xp)(1-p) = 0$, i.e. $\Phi(xp) = \Phi(xp)p$, so \eqref{irr5} implies \eqref{irr4}. This equality is equivalent to $\Phi(px) = p\Phi(px)$ as $\Phi$ is a $\ast$-preserving map, hence \eqref{irr3} is equivalent to \eqref{irr4}. It follows that $\Phi(pxp) = p\Phi(pxp) p$, hence $\Phi(p\M p) \subset p\M p$, thus \eqref{irr2} is implied by the combination of \eqref{irr4} and \eqref{irr3}. From this it clearly follows that $\Phi(p) \leqslant Cp$, as any positive element in $p \M p$ is of the form $pxp$ for some $x\in \M_{+}$, hence it is bounded above by $\|x\| p$, hence we deduced \eqref{irr1}, which finishes the proof that all these conditions are equivalent. 

Now we assume that $\Phi$ is self-adjoint. We use the condition $\Phi(px) = p\Phi(px)$, which can be written as $\Phi L_p = L_p \Phi L_p$, where $L_p$ is the left multiplication by $p$. As $(L_p)^{\ast} = L_p$, the right-hand side is self-adjoint, so it follows that $\Phi L_p = (\Phi L_p)^{\ast} = L_p \Phi $, i.e. $\Phi(px) = p\Phi(x)$. As $\Phi$ is $\ast$-preserving, this is equivalent to $\Phi(xp) = \Phi(x)p$, i.e. $\Phi R_p = R_p \Phi$.
\end{proof}

We will need the non-commutative Perron-Frobenius theorem.
\begin{thm}[{\cite[Theorem 2.3, Theorem 2.5]{PF-cp}}]
Let $\Phi$ be a positive map on a finite dimensional $\mathrm{C}^{\ast}$-algebra $\M$. Let $r$ be the spectral radius of $\Phi$. Then there exists a positive eigenvector $x \in \M$ of $\Phi$ with eigenvalue $r$. If $\Phi$ is irreducible then $r$ is a simple eigenvalue and the eigenvector $x$ is strictly positive, i.e. it is invertible; it is then called the Perron-Frobenius eigenvector of $\Phi$. Moreover $x$ is the unique positive eigenvector of $\Phi$.
\end{thm}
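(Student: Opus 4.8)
The plan is to view this as the Krein--Rutman theorem for the cone $\M_{+}\subset\M$, which is proper, normal and generating, and is preserved by $\Phi$; since $\M$ is finite dimensional all analytic subtleties disappear and one can argue directly. For the first assertion, recall that the spectral radius of a positive operator on an ordered Banach space with such a cone lies in its spectrum, so $r\in\sigma(\Phi)$. For real $t>r$ the resolvent $R(t):=(t-\Phi)^{-1}=\sum_{n\geqslant 0}t^{-n-1}\Phi^{n}$ maps $\M_{+}$ into $\M_{+}$, and $\|R(t)\|\geqslant \mathrm{dist}(t,\sigma(\Phi))^{-1}\to\infty$ as $t\downarrow r$. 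By the uniform boundedness principle there is $y\in\M_{+}\setminus\{0\}$ and a sequence $t_{k}\downarrow r$ with $\|R(t_{k})y\|\to\infty$; normalising $x_{k}:=R(t_{k})y/\|R(t_{k})y\|\in\M_{+}$, $\|x_{k}\|=1$, and passing to a convergent subsequence $x_{k}\to x\in\M_{+}$ (finite dimension), the identity $\Phi(x_{k})=t_{k}x_{k}-y/\|R(t_{k})y\|$ yields $\Phi(x)=rx$ in the limit, with $\|x\|=1$.

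Next, suppose $\Phi$ is irreducible and let $x\in\M_{+}$ be a positive eigenvector, $\Phi(x)=rx$. First, $r>0$: an irreducible positive map is nonzero, and a nilpotent one would have $\Phi(x)=0$ for the eigenvector just produced, which the following argument rules out. Let $p$ be the support projection of $x$. Since $x$ is invertible inside $p\M p$ there is $\delta>0$ with $\delta p\leqslant x\leqslant\|x\|p$, hence $\Phi(p)\leqslant\delta^{-1}\Phi(x)=(r/\delta)\,x\leqslant(r\|x\|/\delta)\,p$. By Lemma~\ref{Lem:irred} this forces $\Phi$ to be reducible unless $p$ is trivial; since $x\neq 0$ we conclude $p=\mathds{1}$, i.e.\ $x$ is strictly positive.

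For simplicity and uniqueness I would bring in the Banach-space adjoint $\Phi'\colon\M^{*}\to\M^{*}$, a positive map for the dual cone of positive functionals with the same spectral radius $r$; applying the first paragraph to $\Phi'$ produces a positive functional $\phi$ with $\phi\circ\Phi=r\phi$. The crucial point is that $\phi$ is faithful: otherwise its support projection $q$ is nontrivial, and for every $a\in\M_{+}$ one has $\phi\bigl(\Phi((1-q)a(1-q))\bigr)=r\,\phi\bigl((1-q)a(1-q)\bigr)=0$; since $\Phi((1-q)a(1-q))\geqslant 0$ and $\phi$ is faithful on $q\M q$, this gives $q\,\Phi((1-q)a(1-q))\,q=0$, whence $\Phi\bigl((1-q)\M(1-q)\bigr)\subseteq(1-q)\M(1-q)$, so $\Phi$ is reducible by Lemma~\ref{Lem:irred} applied with the projection $1-q$ --- a contradiction. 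With $\phi$ faithful: any positive eigenvector $y$, $\Phi(y)=\lambda y$, satisfies $\lambda\phi(y)=\phi(\Phi(y))=r\phi(y)$ with $\phi(y)>0$, so $\lambda=r$; the $r$-eigenspace is one dimensional, because for a self-adjoint eigenvector $y$ (the real and imaginary parts of any eigenvector are eigenvectors, $\Phi$ being $\ast$-preserving and $r$ real) the set $\{t\in\mathbb{R}:x-ty\geqslant 0\}$ is a closed interval with $0$ in its interior and is proper (else $y=0$), so it has a finite nonzero endpoint $t_{0}$ at which $z:=x-t_{0}y\geqslant 0$ is non-invertible and satisfies $\Phi(z)=rz$, forcing $z=0$ by the strict-positivity argument, i.e.\ $y\in\C x$; and there is no Jordan block, since $\Phi(w)=rw+x$ with $w=w^{\ast}$ would give $0=\phi(\Phi(w))-r\phi(w)=\phi(x)>0$. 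Thus $r$ is a simple eigenvalue and, up to a positive scalar, $x$ is the unique positive eigenvector of $\Phi$.

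The main obstacle is establishing the faithfulness of the left eigenfunctional $\phi$: strict positivity, geometric and algebraic simplicity, and uniqueness are all extracted from it together with the strict-positivity argument, and it is precisely here that the full force of irreducibility --- through the equivalences in Lemma~\ref{Lem:irred} --- is used. The existence step is essentially routine once one invokes the standard fact that the spectral radius of a positive operator belongs to its spectrum.
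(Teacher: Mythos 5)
Your proof is correct, but note that the paper does not prove this statement at all: it is imported verbatim from \cite{PF-cp} (Evans--H{\o}egh-Krohn), so there is no internal proof to compare against. What you have written is essentially the classical Krein--Rutman/Evans--H{\o}egh-Krohn argument, and all the steps check out: the resolvent blow-up plus compactness of the unit sphere of $\M_{+}$ gives the positive $r$-eigenvector; the support-projection estimate $\Phi(p)\leqslant (r\|x\|/\delta)p$ correctly contradicts irreducibility unless $p=\mathds{1}$; the faithful left eigenfunctional $\phi$ then delivers, in the right order, that every positive eigenvector has eigenvalue $r$, that the $r$-eigenspace is one-dimensional (via the boundary point of $\{t: x-ty\geqslant 0\}$), and that there is no Jordan block. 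Two small points are worth flagging. First, Lemma \ref{Lem:irred} is stated for \emph{completely} positive maps, whereas the theorem concerns merely positive maps; you only invoke condition (1) (which is the paper's definition of reducibility) and the implication (2)$\Rightarrow$(1), neither of which uses the Kadison--Schwarz step, so there is no circularity --- but this should be said explicitly. Second, the one genuinely external ingredient is the fact that the spectral radius of a positive map on a C$^{*}$-algebra lies in its spectrum (Pringsheim-type argument applied to $\lambda\mapsto\varphi(R(\lambda)a)$ for positive $a$ and positive functionals $\varphi$, using that the cone is generating and normal); you cite it as standard, which is acceptable, but it is the load-bearing fact for the existence part and deserves at least a reference. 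With those caveats, the argument is complete and, unlike the cited source, self-contained relative to the paper's Lemma \ref{Lem:irred}.
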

\begin{rem}
The converse may not be true, i.e. $r$ could be a simple eigenvalue yet $\Phi$ may be reducible. Indeed, $\Phi$ can split as a direct sum of two irreducible maps $\Phi_1$ and $\Phi_2$ such that the spectral radius of $\Phi_1$ is strictly bigger than that of $\Phi_2$. Then the Perron-Frobenius eigenvector of $\Phi_1$ will be the unique eigenvector of the highest eigenvalue.

Even if we know that $r$ is a simple eigenvalue and the corresponding eigenvector is strictly positive then the map $\Phi$ can fail to be irreducible. Consider the matrix $S:=\left(\begin{array}{cc} 1 & 1 \\ 0 & 2 \end{array}\right)$ acting on $\mathbb{C}^2$. It has eigenvalues $1$ and $2$ and the eigenvector corresponding to the eigenvalue $2$ is a multiple of $(1,1)$, so it is strictly positive. The matrix $S$ is not irreducible, because the projection $e_1 \in \mathbb{C}^2$ is an eigenvector of $S$.

In case that $\Phi$ is self-adjoint with respect to some trace, Lemma \ref{Lem:irred} shows that if $r$ is a simple eigenvalue and the corresponding eigenvector is strictly positive, then $\Phi$ has to be irreducible. Indeed, if $\Phi$ was not irreducible then for some non-trivial projection $p\in \M$, we would have $\Phi(xp) = \Phi(x)p$. Let $y$ be the the strictly positive eigenvector, i.e. $\Phi(y) = ry$. It follows that $\Phi(yp) = ryp$, so $yp$ is another eigenvector with eigenvalue $r$; $yp \neq 0$, because $y$ is strictly positive.
\end{rem}
\newpage

\section{Connectivity}
In this section we assume that $\psi$ is tracial. 

The following definition is motivated by the fact that if $G$ is a disconnected classical graph, then the adjacency matrix of $G$ can be expressed as a block diagonal matrix with each block corresponding to a connected component of the graph.

\begin{defn}[Connected quantum graphs] \label{definition: connectivity 1}
Let  $\G = (\M, \psi, A)$ be a real tracial quantum graph.
\begin{enumerate}
\item 
An \textit{undirected} quantum graph $\G$ is said to be \textit{connected} if the quantum adjacency matrix $A$ does not commute with any non-trivial projection $p \in \M$, i.e.
\begin{equation}
L_p A = A L_p \implies p = 0 \text{ or } \mathds{1}.
\end{equation}
Otherwise, we say $\G$ is disconnected.
\item A \textit{directed} quantum graph $\G$ is said to be \textit{strongly connected} if there does not exist any non-trivial projection $p \in \M$ such that 
$L_p A L_p = A L_p$. 
%Otherwise, we say $\G$ is disconnected.
\item $\G$ is said to be \textit{totally disconnected} if $A \in \M' \subseteq B(L^2(\M, \psi))$. 
\item If there exists a non-trivial projection $p \in \M$ such that $L_p A  = A L_p$ and $p$ is a minimal such projection, then we say that  $p\M p$ is a connected component of $\G$ (in the language of \cite[Section 6]{Weaver} it is the restriction of the quantum graph to $p\M p$).  \textcolor{blue}{Equivalently, $L_p$ is the orthogonal projection onto a connected component of $\G$.} 
\end{enumerate}
\end{defn}

\begin{rems}
\begin{enumerate}
    \item The term \textit{strongly connected} refers to connectivity of \textit{directed} quantum graphs, while simply the term \textit{connected} refers to the case of \textit{undirected} quantum graphs. Both are equivalent to irreducibility of the quantum adjacency matrix $A$ by Lemma \ref{Lem:irred}. 
    \item  In general, if $A$ and $P$ are two bounded operators on a Hilbert space $\mathcal{H}$ with $P$ a projection, the relation $AP=PAP$ says exactly that $AP\mathcal{H}\subset P\mathcal{H}$, i.e., the range of $P$ is an invariant subspace for $A$. When $A$ is self-adjoint, this says that a projection $P$ commutes with $A$ if and only if its range is an invariant subspace for $A$. 
    \item In finite dimensions, any eigenspace of $A$ is an invariant subspace of $A$ and hence the orthogonal projection $P$ onto an eigenspace of $A$ will satisfy $(I-P)AP = 0$. Therefore, in order to define connectivity, we must restrict the choice of $P$, and the projections $p$ from the quantum set $\M$ turn out to be a good candidate for this.
\end{enumerate}

\end{rems}

% In the classical case, it is known that if $A$ is self-adjoint, then 
% a projection $P$ commutes with $A$ $\iff$ Range$(P)$ is an invariant subspace for $A$, as noted below:
% \begin{prop}
% $(I-P)AP  = 0 \iff$ Range($P$) is an invariant subspace for $A$.
% \end{prop}
% \begin{proof}
% First note that $AP = PAP$ implies $Range(AP) \subseteq Range(P)$. 
% \begin{itemize}
% \item[$(\Leftarrow)$] Let $R$ be an invariant subspace for $A$ and $P$ be projection into $R$.
% Then, $Px = x$ for all $ x \in R$. Thus
% \[P(APx) = A (Px) = Ax \in R\]
% Hence, $PAP = AP$ and $(I-P)AP = 0$.
% \item[$(\Rightarrow)$] Let $R$ be the range of $P$ and let $\tilde{P}$ be the orthogonal projection onto $R$.
% Then, $R = \{ Pz: z \in L^2(\M)\}$.
% \[ A(Pz) = PAPz = P(APz) \in R\]
% Thus, $R$ is an invariant subspace for $A$.
% \end{itemize}
% \end{proof}

% This motivates the following definition: 
% \begin{defn}
% If there exists a non-trivial projection $p \in \M$ such that $L_p A  = A L_p$, then we say that   \textcolor{red}{Range $(L_p)$ or $p\M p$} is a connected component of $\G$. Equivalently, $L_p$ is the orthogonal projection onto a connected component of $\G$.
% \end{defn}

% \begin{rem}
% In finite dimensions, any eigenspace of $A$ is an invariant subspace of $A$ and hence the orthogonal projection $P$ onto an eigenspace of $A$ will satisfy $(I-P)AP = 0$, as noted above. Therefore, in order to define connectivity, we must restrict the choice of $P$ and the projections $p$ from the quantum set $\M$ turn out to be a good candidate for this.
% \end{rem} 

We now show that the above definition captures the classical notion of connectedness.

\begin{prop}
Let $G$ be a simple finite classical graph on $n$ vertices and adjacency matrix $A$. Then, $G$ is connected if and only if the corresponding non-commutative graph $\G = (D_n, \Tr, A)$ is connected in the sense of definition \ref{definition: connectivity 1}.
\end{prop}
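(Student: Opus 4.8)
The plan is to unwind both sides of the equivalence into a concrete statement about the combinatorial structure of $G$ and the matrix $A$, using the fact that in this commutative setting $\M = D_n$ is the algebra of diagonal matrices, the projections in $\M$ are exactly the coordinate projections $p_U = \sum_{i \in U} e_{ii}$ for subsets $U \subseteq \{1,\dots,n\}$, and left multiplication $L_{p_U}$ on $L^2(D_n,\Tr) \cong \mathbb{C}^n$ is just the diagonal projection onto the coordinates in $U$. So $L_{p_U} A = A L_{p_U}$ holds if and only if $A$, viewed as an $n \times n$ matrix, has no nonzero entries between $U$ and its complement $U^c$, i.e. $A_{ij} = 0 = A_{ji}$ whenever $i \in U$, $j \in U^c$. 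I would first record this dictionary in a sentence or two.

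Next I would verify that the classical adjacency matrix $A$ of $G$ is indeed a legitimate quantum adjacency matrix on $(D_n,\Tr)$ — that is, that it is a quantum Schur idempotent, real, and KMS-symmetric (here automatic since $\Tr$ is tracial and $A$ is symmetric with $0/1$ entries). On $D_n = \mathbb{C}^n$ the quantum Schur product $\bullet$ reduces to the ordinary entrywise product $\odot$, and the condition $m(A\otimes A)m^\ast = A$ becomes $A \odot A = A$, which holds precisely because $A$ has $0/1$ entries. This justifies speaking of connectivity of $\G = (D_n,\Tr,A)$ in the sense of Definition \ref{definition: connectivity 1} in the first place.

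With the dictionary in hand the proof is a direct translation of the standard fact from classical graph theory. If $G$ is disconnected, pick a connected component with vertex set $U$, a proper nonempty subset; then there are no edges between $U$ and $U^c$, so $A_{ij} = 0$ for $i \in U$, $j \in U^c$ (and by symmetry also $A_{ji}=0$), hence $L_{p_U}$ commutes with $A$ while $p_U$ is a non-trivial projection in $D_n$, so $\G$ is disconnected. Conversely, if $\G$ is disconnected there is a non-trivial projection, necessarily of the form $p_U$ for some proper nonempty $U$, with $L_{p_U} A = A L_{p_U}$; then $A_{ij}=0$ for all $i \in U$, $j \in U^c$, meaning there are no edges of $G$ joining $U$ to $U^c$, so $G$ is disconnected. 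Taking contrapositives gives the stated equivalence.

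There is no real obstacle here; the only point requiring a little care is the translation between the operator-algebraic phrasing ($L_p A = A L_p$ in $B(L^2(\M,\psi))$) and the matrix phrasing, and the observation that every projection in the \emph{commutative} algebra $D_n$ is a coordinate projection $p_U$ — this is what makes the class of "allowed" projections in Definition \ref{definition: connectivity 1} coincide exactly with the vertex-bipartitions used in the classical definition of connectivity. One should also note explicitly that connectivity of $G$ as a classical graph is equivalent to: there is no proper nonempty $U$ with no edges between $U$ and $U^c$; this is the standard characterization and I would simply cite or state it.
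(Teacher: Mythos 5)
Your proposal is correct and follows essentially the same route as the paper: identify the non-trivial projections of $D_n$ with proper nonempty vertex subsets $U$, translate $L_{p_U}A = AL_{p_U}$ (using self-adjointness of $A$) into the vanishing of all entries $A_{ij}$ with $i\in U$, $j\notin U$, and observe that this is exactly the classical characterization of disconnectedness. The extra check that $A$ is a quantum Schur idempotent on $(D_n,\Tr)$ is a harmless addition that the paper takes for granted.
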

\begin{proof}
$\G$ is disconnected if and only if there is a non-trivial subset $X \subset [n]$ such that $a_{ij} = 0$ whenever $i\in X$ and $j \in [n]\setminus X$. This means exactly that $A = L_p A L_p + (\mathds{1}-L_p)A(\mathds{1}-L_p)$, where $p \in D_n$ is the orthogonal projection corresponding to $X$. By self-adjointness of $A$ it is equivalent to the condition $L_p A = A L_p$. 
\end{proof}

We next show that Definition \ref{definition: connectivity 1} agrees with and unifies the existing notions of connectivity for quantum graphs in the literature.

\begin{thm}\label{Thm:mainequivalence}
Let $\G = (\M, \psi, A, S)$ be an undirected, real, tracial quantum graph. Let $T_2$ denote the classical totally disconnected graph on two vertices. The following are equivalent:
\begin{enumerate}
\item\label{main1} $A$ is a reducible completely positive map.
\item\label{main2} There exists a non-trivial projection $p \in \M$ such that $L_pA = A L_p$.
\item\label{main3} There exists a non-trivial projection $p \in \M$ such that $(I-L_p) S L_p = 0$.
\item\label{main4} $C^*(S) \neq B(L^2(\M, \psi))$. In particular, for matrix quantum graphs $(M_n, n\mathrm{Tr}, A)$, we have $S^k \neq M_n$ for all $k \in \mathbb{N}$, where $S\subset M_n$ is the self-adjoint subspace corresponding to $\G$.
\item\label{main5} The sequence of projections $(p_1, p_2, p_3 \ldots ) \in \M \otimes \M^{op}$, where $p_k$ corresponds to the quantum relation $S^{k}$ (or, equivalently, $p_k$ is the support projection of the Choi matrix of $A^{k}$) satisfies $\sup_{k} p_k \neq \mathds{1}$. In the reflexive case, this is an increasing sequence $p_1 \le p_2 \le p_3 \ldots$.
\item\label{main6} There exists an injective unital *-homomorphism $f: \C^2 \to \M$ satisfying 
\[ \begin{bmatrix} 1 & 0 \\ 0 & 1 \end{bmatrix} \odot (f^*Af) = (f^*Af), \]
where $\odot$ denotes the Schur product of two matrices. \item\label{main7} There exists a $(loc)$-graph homomorphism from $\G$ to $T_2$ in the sense of \cite{BraGanHar}.
\item\label{main8} There exists a surjective graph homomorphism from $\G$ to $T_2$ in the sense of \cite{MRV}.
\item\label{main9} $0$ is a simple eigenvalue of the quantum graph Laplacian $\Delta$ (see \cite[Remark 2.7]{Matsuda2}).
\end{enumerate}

\end{thm}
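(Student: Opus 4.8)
The plan is to prove that each of \eqref{main1}--\eqref{main9} is equivalent to \eqref{main2}, i.e.\ to the existence of a non-trivial projection $p\in\M$ commuting with $A$ on $L^{2}(\M,\psi)$, so that the proof splits into several short ``spokes''. For \eqref{main1} $\Leftrightarrow$ \eqref{main2}: reality of $A$ makes it completely positive (the remark following Proposition \ref{prop:Choiproperties}), and since $\G$ is undirected with $\psi$ tracial, $A$ is self-adjoint for the trace $\psi$; the last sentence of Lemma \ref{Lem:irred} then says precisely that $A$ is reducible iff $L_{p}A=AL_{p}$ for some non-trivial projection $p\in\M$. For \eqref{main2} $\Leftrightarrow$ \eqref{main6}: a unital injective $\ast$-homomorphism $f\colon\C^{2}\to\M$ is exactly the datum of a non-trivial projection $p\in\M$, via $f(s,t)=sp+t(\mathds{1}-p)$, and a direct computation using that $A$ is $\ast$-preserving and $\psi$-self-adjoint shows that both off-diagonal entries of the $2\times2$ matrix $f^{\ast}Af$ equal a fixed positive multiple of $\psi\bigl((\mathds{1}-p)A(p)(\mathds{1}-p)\bigr)$; hence $\begin{bmatrix}1&0\\0&1\end{bmatrix}\odot(f^{\ast}Af)=f^{\ast}Af$ holds iff $(\mathds{1}-p)A(p)(\mathds{1}-p)=0$, iff $A(p)(\mathds{1}-p)=0$ (as $A(p)\geqslant0$ and $\psi$ is faithful), which is condition \eqref{irr5} of Lemma \ref{Lem:irred}. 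Finally \eqref{main6} $\Leftrightarrow$ \eqref{main7} $\Leftrightarrow$ \eqref{main8}: since $T_{2}$ is the classical two-vertex quantum graph with adjacency matrix $\begin{bmatrix}1&0\\0&1\end{bmatrix}$, a graph homomorphism $\G\to T_{2}$ in the sense of \cite{MRV} corresponds contravariantly to a unital $\ast$-homomorphism $\C^{2}\to\M$ (surjectivity of the homomorphism matching injectivity of $f$), and the $(loc)$-homomorphism notion of \cite{BraGanHar} collapses to the same data and intertwining identity for this classical target; in both cases one recovers exactly \eqref{main6}.

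For the block \eqref{main2} $\Leftrightarrow$ \eqref{main3} $\Leftrightarrow$ \eqref{main4} $\Leftrightarrow$ \eqref{main5} I would run through the dictionary between $A$, the $\M'$-bimodule $S$, and the Choi matrix. Using the description of $A$ in Proposition \ref{prop:qadjacencyfrombimodule} — which in the tracial case is a genuine Kraus decomposition built from an orthonormal basis of $S$ — a short computation, pairing against all of $\M$, shows that $L_{p}AL_{p}=AL_{p}$ holds iff $(I-L_{p})SL_{p}=0$; since $\G$ is undirected, $S=S^{\ast}$ and $L_{p}AL_{p}=AL_{p}\Leftrightarrow L_{p}A=AL_{p}$ by the self-adjoint case of Lemma \ref{Lem:irred}, which gives \eqref{main2} $\Leftrightarrow$ \eqref{main3}. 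If $(I-L_{p})SL_{p}=0$ then also $L_{p}S(I-L_{p})=0$, so $S=L_{p}SL_{p}\oplus(I-L_{p})S(I-L_{p})$ and $C^{\ast}(S)$ lies in the proper corner $L_{p}B(H)L_{p}\oplus(I-L_{p})B(H)(I-L_{p})$, which is \eqref{main4}; conversely, if $C^{\ast}(S)\neq B(H)$ then, being a $\ast$-algebra, it has a non-trivial reducing subspace, and one checks (using that $S$ is an $\M'$-bimodule) that the corresponding projection may be taken in $\M$, recovering \eqref{main3}. For \eqref{main4} $\Leftrightarrow$ \eqref{main5} one uses that $\mathrm{Choi}$ turns composites of quantum relations into products, so $p_{k}$ is the support projection attached to $S^{k}$, the joins $p_{1}\vee\cdots\vee p_{k}$ increase, $\sup_{k}p_{k}$ is the support projection attached to $C^{\ast}(S)$ (in finite dimensions $\sum_{j\leqslant k}S^{j}$ stabilises to the $\ast$-algebra $C^{\ast}(S)$, which in the matrix case is Swift's condition $S^{k}=M_{n}$ from \cite{Swift}), and $\sup_{k}p_{k}=\mathds{1}$ exactly when $C^{\ast}(S)=B(H)$.

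For \eqref{main2} $\Leftrightarrow$ \eqref{main9} I would identify the kernel of the quantum graph Laplacian $\Delta$ (see \cite[Remark 2.7]{Matsuda2}) with the unital $\ast$-subalgebra of harmonic elements of $\M$, and show that it coincides with the relative commutant $\{x\in\M : L_{x}A=AL_{x}\}$ (in the $d$-regular case this is the eigenspace of $A$ for its largest eigenvalue, and one may instead invoke the non-commutative Perron--Frobenius theorem quoted above). A unital $\ast$-subalgebra of $\M$ is one-dimensional iff it contains no non-trivial projection, so the nullity of $\Delta$ detects exactly whether $\M$ carries a non-trivial projection commuting with $A$; comparing this with \eqref{main2} yields the equivalence with \eqref{main9}.

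I expect the routine spokes to be \eqref{main1} $\Leftrightarrow$ \eqref{main2}, \eqref{main2} $\Leftrightarrow$ \eqref{main6}, and \eqref{main2} $\Leftrightarrow$ \eqref{main9}. The main obstacle is the block \eqref{main3} $\Leftrightarrow$ \eqref{main4} $\Leftrightarrow$ \eqref{main5}: one must make the passages between $A$ acting on $L^{2}(\M,\psi)$, the $\M'$-bimodule $S\subseteq B(H)$, and the Choi projections in $\M\otimes\M^{\mathrm{op}}$ fully precise — including the representation-independence needed to align the general formulation with the $L^{2}$-statement in \eqref{main4} and with Swift's $S^{k}=M_{n}$ — verify the stabilisation $\sum_{j\leqslant k}S^{j}\to C^{\ast}(S)$ in finite dimensions, and check that a reducing projection for $C^{\ast}(S)$ can be chosen inside $\M$ rather than merely in the ambient $B(H)$. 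Reconciling the two a priori different graph-homomorphism notions in \eqref{main7}--\eqref{main8} with \eqref{main6}, and pinning down the precise form of $\Delta$ used in \eqref{main9}, are secondary, more bookkeeping-heavy points.
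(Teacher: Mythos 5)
Your overall architecture is the same as the paper's: (\ref{main1})$\Leftrightarrow$(\ref{main2}) via Lemma \ref{Lem:irred}, the bimodule/Kraus dictionary plus Burnside for (\ref{main2})$\Leftrightarrow$(\ref{main3})$\Leftrightarrow$(\ref{main4}), the span of powers of $S$ for (\ref{main5}), the Schur-product computation for (\ref{main6}), and the observation that $\ker\Delta=\ker\nabla_A$ is a unital $\ast$-subalgebra of $\M$ for (\ref{main9}). Your direct computation of the off-diagonal entries of $f^{\ast}Af$ is exactly how the paper handles (\ref{main6}), and your treatment of (\ref{main9}) matches Proposition \ref{prop:Laplacianconnected} (modulo the harmless replacement of $R_y$ by $L_x$, which is legitimate here since the kernel of the gradient is $\ast$-closed).

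Two points in your sketch are genuine gaps rather than bookkeeping. First, condition (\ref{main8}): a graph homomorphism in the sense of \cite{MRV} is not merely "the datum of a unital $\ast$-homomorphism $f\colon\C^2\to\M$"; it is such an $f$ subject to the intertwining condition $P_{A_1}(f\otimes f)P_{A_2}=P_{A_1}(f\otimes f)$ built from the bimodular projections $P_A=(m\otimes id)(id\otimes A\otimes id)(id\otimes m^{\ast})$, and this is formally quite different from the Schur-product condition in (\ref{main6}). Showing the two coincide for the target $T_2$ is the content of Proposition \ref{prop:homomorphismconnected} and the proposition preceding it, and requires a nontrivial Choi-matrix computation (rewriting the MRV condition as $ff^{\ast}\bullet A=A$, computing $\mathrm{Choi}(ff^{\ast})=p_1\otimes p_1^{op}+p_2\otimes p_2^{op}$, and using that $\mathrm{Choi}(A)$ is a projection to get $ff^{\ast}\geqslant A$). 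Your sketch asserts the conclusion without engaging with this condition at all. Second, in (\ref{main3})$\Leftrightarrow$(\ref{main4}) you say that a reducing projection for $C^{\ast}(S)$ "may be taken in $\M$" using the bimodule property; the clean argument (a reducing projection commutes with $\M'\subset C^{\ast}(S)$, hence lies in $\M''=\M$) only works when the algebra generated by $S$ is unital, which fails for non-reflexive graphs. The paper's Lemma \ref{lem:projectioninalgebra} closes this by showing that $\mathrm{alg}(S)+\M'$ is still a proper subalgebra whenever $\mathrm{alg}(S)$ is, via a block-decomposition argument; some such argument is needed and is absent from your sketch.
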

We need some preparation for the proof. First, we discuss the directed case, where we prove that irreducibility of the quantum adjacency matrix $A$ is equivalent to the fact that the algebra generated by the corresponding quantum relation $S$ is equal to $B(H)$ on which $\M$ is represented, regardless of the representation. We begin with a lemma.
\begin{lemma}\label{lem:Burnside}
Let $\G:=(\M, \psi, A)$ be a (possibly directed) quantum graph. 
%Let $\pi_1: \M \to B(H_1)$ and $\pi_2: \M \to B(H_2)$ be two faithful representations and let $S_1 \subset B(H_1)$ and $S_2\subset B(H_2)$ be the corresponding $\M'$-bimodules. The following conditions are equivalent for $S_1$ and $S_2$ (we refer to either of them by $S$):
For each of the following conditions, if the condition holds for some faithful representation $\pi:\M\to B(H)$ and corresponding $\M'$-bimodule $S\subset B(H)$, then it holds for any faithful representation and corresponding $\M'$-bimodule.
\begin{enumerate}
\item\label{Burnside} The algebra generated by $S$ is not equal to $B(H)$;
\item\label{Invariantsubspace} There exists a non-trivial projection $P\in B(H)$ such that $(\mathds{1} - P)SP = 0$;
\item\label{Disconnected} There exists a non-trivial projection  $p \in \pi(\M)$ such that $(\mathds{1} - p)Sp=0$.
\end{enumerate}
Moreover, conditions \eqref{Burnside} and \eqref{Invariantsubspace} are equivalent.
\end{lemma}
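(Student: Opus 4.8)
The plan is to first establish the equivalence of \eqref{Burnside} and \eqref{Invariantsubspace} for a fixed representation, and then to show representation-independence for each of the three conditions. For the equivalence of \eqref{Burnside} and \eqref{Invariantsubspace}: since $S$ is an $\M'$-bimodule and $\M$ is finite dimensional, the algebra $\mathcal{A}$ generated by $S$ (which I should add contains $\M$ if $S$ is reflexive, but in general I will work with the algebra generated by $S$ together with $\M$ acting on the left and right, or simply note that $\M' \subseteq \mathcal{A}'$) is a finite-dimensional $*$-closed... — here I need to be careful, as $S$ need not be self-adjoint in the directed case. The cleanest route is Burnside's theorem on the level of the non-selfadjoint algebra: a subalgebra of $B(H)$ equals $B(H)$ if and only if it has no non-trivial invariant subspace (this is the classical Burnside theorem for $H$ over $\mathbb{C}$, applied to the unital algebra generated by $S$). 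So $\mathcal{A} \neq B(H)$ iff there is a non-trivial $P$ with $\mathcal{A}P \subseteq PB(H)$, i.e. $(\mathds{1}-P)\mathcal{A}P = 0$; and since $\mathcal{A}$ is generated by $S$ (as a unital algebra), $(\mathds{1}-P)\mathcal{A}P = 0$ is equivalent to $(\mathds{1}-P)SP = 0$ — one direction is trivial, the other follows because the set of $X$ with $(\mathds{1}-P)XP = 0$ is a unital subalgebra containing $S$. This gives \eqref{Burnside} $\Leftrightarrow$ \eqref{Invariantsubspace} within any fixed representation.

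Next, representation independence. The key input is that the correspondence between $\M'$-bimodules and quantum adjacency matrices via Proposition \ref{prop:qadjacencyfrombimodule} is representation-independent: the quantum adjacency matrix $A$ (equivalently its Choi matrix in $\M \otimes \M^{\mathrm{op}}$) is an intrinsic object attached to $\G$, and for any two faithful representations $\pi_1, \pi_2$ with bimodules $S_1, S_2$, the corresponding $A$ is the same. Concretely, I would appeal to the fact (from \cite{Weaver12} or \cite{Was24}) that two faithful representations of a finite-dimensional $\M$ are unitarily equivalent up to multiplicity, i.e. $\pi_2 \cong (\pi_1 \otimes 1_K)$ on $H_1 \otimes K$ after a unitary, and under this identification the bimodule transforms as $S_2 = u(S_1 \otimes B(K)) u^*$ (or $S_1 \otimes 1_K$ composed with amplification — I will pin down the exact form). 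Under amplification $S \mapsto S \otimes 1_K$, the algebra generated is amplified accordingly, and invariant subspaces of the amplification containing... — the point is that $\mathcal{A} = B(H_1)$ iff $\mathcal{A} \otimes 1_K$ has commutant $1 \otimes B(K)$, which is a representation-independent statement about $\mathcal{A}$. For condition \eqref{Disconnected}, I use Lemma \ref{Lem:irred}: the existence of a non-trivial projection $p \in \pi(\M)$ with $(\mathds{1}-p)Sp = 0$ translates, via Proposition \ref{prop:qadjacencyfrombimodule}, into $(I - L_p) A L_p = 0$ as an equation about the intrinsic map $A$ on $L^2(\M,\psi)$ (or about the Choi matrix), which manifestly does not reference the representation; here I must check that $(\mathds{1}-p)S p = 0$ on $B(H)$ corresponds exactly to the reducibility condition $\Phi(p\M p) \subseteq p\M p$ of Lemma \ref{Lem:irred} applied to $\Phi = A$, using the explicit formula $A_{ab}(x) = \sum_i \rho_b^{-1/4} S^i_{ab} \rho_a^{1/4} x \rho_a^{1/4} (S^i_{ab})^* \rho_b^{-1/4}$.

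For the implications among \eqref{Burnside}, \eqref{Invariantsubspace}, \eqref{Disconnected} beyond just representation-independence: \eqref{Disconnected} $\Rightarrow$ \eqref{Invariantsubspace} is immediate (take $P = p$, a genuine projection in $B(H)$). The reverse direction \eqref{Invariantsubspace} $\Rightarrow$ \eqref{Disconnected} is the substantive one and I expect it to be the main obstacle: given an arbitrary invariant subspace for the algebra generated by $S$, one must produce an invariant subspace coming from a projection in $\M$ itself. The idea is to use the $\M'$-bimodule structure: if $P$ is invariant for $S$, then so is $m' P m'^*$-type averages over $\M'$, and because $S$ is an $\M'$-bimodule one can push $P$ to lie under a projection in $(\M')' = \M''= \pi(\M)$ (in finite dimensions $\pi(\M)$ is its own double commutant). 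More precisely, I would consider the projection onto $\overline{\M' P H}$ or onto $\bigvee_{u \in \mathcal{U}(\M')} uPH$; this is an $\M'$-invariant subspace hence its projection lies in $(\M')' = \pi(\M)$ (this is where finite-dimensionality and faithfulness are used), and since $S$ commutes with $\M'$ and $P$ is $S$-invariant, this larger subspace is still $S$-invariant — giving a non-trivial $p \in \pi(\M)$ with $(\mathds{1}-p)Sp = 0$, provided it is still non-trivial, which holds because $PH \subseteq pH \subsetneq H$ forces $p \neq \mathds{1}$ only if the span does not fill $H$; handling the edge case where $\M' P H = H$ requires instead looking at $\bigwedge u P H$ or passing to complements, so I would phrase it symmetrically: one of the two $\M'$-invariant hulls (of $PH$ or of $(1-P)H$) is non-trivial. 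This bimodule-averaging step, and the care needed with non-self-adjoint $S$ in the directed case, is where the real work lies.
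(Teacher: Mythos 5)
Your proof of what the lemma actually asserts is correct and follows essentially the same route as the paper: the equivalence of \eqref{Burnside} and \eqref{Invariantsubspace} is exactly the classical Burnside theorem applied to the (possibly non-self-adjoint) algebra generated by $S$, together with the observation that $\{X : (\mathds{1}-P)XP=0\}$ is a subalgebra containing $S$; and representation-independence is obtained by passing to inflations $\pi\otimes\mathds{1}_{\ell^2}$, where the bimodule becomes $S\otimes B(\ell^2)$, and using that inflations of any two faithful representations of a finite-dimensional $\M$ are unitarily equivalent. Your alternative remark that condition \eqref{Disconnected} can be rephrased intrinsically as reducibility of $A$ via Proposition \ref{prop:qadjacencyfrombimodule} is also sound and is in fact how the paper exploits this condition later (Proposition \ref{Prop:irredbimod}).

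One caution: the last third of your write-up attempts to prove \eqref{Invariantsubspace} $\Rightarrow$ \eqref{Disconnected}, which this lemma does not claim — that implication is the content of the subsequent Lemma \ref{lem:projectioninalgebra}, and the paper proves it only in the standard representation $H=\bigoplus_a\C^{n_a}$. Your proposed argument for it (replace $PH$ by the $\M'$-invariant hull $\overline{\M'PH}$, whose projection lies in $(\M')'=\pi(\M)$ and which is still $S$-invariant by the bimodule property) is fine up to the point you yourself flag: the hull may equal all of $H$, and the suggested fallback of taking the hull of $(\mathds{1}-P)H$ does not work, since $(\mathds{1}-P)H$ is co-invariant rather than invariant for $S$ in the directed case, so its $\M'$-hull need not be $S$-invariant. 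The paper avoids this by an algebraic rather than spatial argument: it shows that if the algebra $B$ generated by $S$ is proper, then $B+\M'$ is still a proper subalgebra (a block-decomposition computation using that $\M'$ sits in the diagonal blocks of the standard representation), and then any nontrivial invariant subspace of $B+\M'$ is automatically $\M'$-invariant, so its projection lies in $\pi(\M)$. If you want to complete your stronger claim, you should adopt that step; as a proof of the lemma as stated, what you have before that digression suffices.
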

\begin{proof}
It follows from Weaver's proof that the definition of a quantum relation does not depend on the choice of a representation \cite[Theorem 2.7]{Weaver12}. %(It is the same as in the arXiv version). 
Note that it is clear that these conditions are equivalent for a representation $\pi$ and its inflation $\pi\otimes \mathds{1}_{\ell^2}: \M \to B(H\otimes \ell^2)$, because then the inflated $S$ will be $S \otimes B(\ell^2) $. However, for two faithful representations $\pi_1$ and $\pi_2$ the inflations $\pi_1 \otimes \mathds{1}_{\ell^2}$ and $\pi_2\otimes \mathds{1}_{\ell^2}$ are unitarily equivalent, so one can transfer all of these properties.

Finally, the equivalence of \eqref{Burnside} and \eqref{Invariantsubspace} follows from Burnside's theorem, saying that any proper subalgebra of a matrix algebra admits a non-trivial invariant subspace.
\end{proof}
We will now choose a specific representation, for which it is easy to prove that the conditions \eqref{Invariantsubspace} and \eqref{Disconnected} are also equivalent.
\begin{lemma}\label{lem:projectioninalgebra}
Let $\M := \bigoplus_{a=1}^{d} M_{n_a}$ be a finite dimensional $\mathrm{C}^{\ast}$-algebra, represented on the Hilbert space $H:= \bigoplus_{a=1}^{d} \mathbb{C}^{n_a}$. Let also $S \subset B(H)$ be an $\M'$-bimodule. If there exists a non-trivial projection $P \in B(H)$ such that $(\mathds{1} - P)SP=0$ then there also exists a non-trivial projection $p\in \M$ such that $(\mathds{1} - p)Sp=0$.
\end{lemma}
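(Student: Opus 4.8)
The plan is to start from the non-trivial projection $P \in B(H)$ satisfying $(\mathds{1}-P)SP = 0$ and to manufacture from it a projection that lies in $\M$ itself. The key observation is that $S$ is an $\M'$-bimodule, so the condition $(\mathds{1}-P)SP=0$ is stable under replacing $P$ by $u P u^{\ast}$ for any unitary $u \in \M'$: indeed $(\mathds{1}-uPu^\ast)S(uPu^\ast) = u\big((\mathds{1}-P)(u^\ast S u)P\big)u^\ast = u\big((\mathds{1}-P)SP\big)u^\ast = 0$, using $u^\ast S u = S$. Thus the set $\mathcal{K}$ of non-trivial projections $Q \in B(H)$ with $(\mathds{1}-Q)SQ=0$ is invariant under conjugation by the unitary group of $\M'$. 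The strategy is then to average over this group (or, equivalently, to pass to a suitable extreme or invariant element of $\mathcal K$) in order to land inside $(\M')' = \M$.

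Concretely, I would argue as follows. First, note that $(\mathds{1}-Q)SQ=0$ says precisely that $Q\mathcal H$ is an invariant subspace for $S$; since $S$ is a $*$-invariant bimodule (it corresponds to an undirected—or at least to a genuine quantum—relation, and in any case $S$ being a bimodule over the $*$-algebra $\M'$ is what matters), one should check that the orthogonal complement $(\mathds{1}-Q)\mathcal H$ is also $S$-invariant when $S = S^*$; but for the general (possibly directed) statement we only need invariance of $Q\mathcal H$. Now decompose $H = \bigoplus_{a=1}^d \mathbb C^{n_a}$, so that $\M' = \bigoplus_{a=1}^d (\mathds{1}_{n_a} \otimes M_{n_a}')$-type blocks; more precisely, writing $\M \simeq \bigoplus_a M_{n_a} \otimes \mathds 1$ acting on $\bigoplus_a \mathbb C^{n_a}$ is not quite the standard form, so I would instead represent $\M$ on $L^2(\M,\psi) \simeq \bigoplus_a M_{n_a}$ where $\M' \simeq \bigoplus_a \mathds 1_{n_a}\otimes M_{n_a}^{\mathrm{op}}$, and then a projection commuting with all of $\M'$ is exactly a projection of the form $p \oplus$-decomposing blockwise with $p \in \M$. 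The task is to replace $P$ by such a $p$ without destroying $(\mathds 1 - p)Sp = 0$.

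For that replacement I would use an averaging/conditional–expectation argument: let $E: B(H) \to (\M')' = \M$ be the trace-preserving conditional expectation (equivalently, integrate $u(\cdot)u^\ast$ over the Haar measure of the compact unitary group of $\M'$). Then $E(P)$ is a positive contraction in $\M$. This $E(P)$ need not be a projection, so the final step is a spectral-projection argument: one shows that the invariance condition $(\mathds 1-Q)SQ=0$ for all conjugates $Q = uPu^\ast$ forces the spectral projections of $E(P)$ to also satisfy it, and then picks a non-trivial spectral projection $p = \mathbf{1}_{[\lambda, 1]}(E(P)) \in \M$. One must check such a non-trivial spectral projection exists, i.e. $E(P)$ is neither $0$ nor $\mathds 1$ — this follows from $0 \lneq P \lneq \mathds 1$ and faithfulness of $E$. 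The main obstacle I anticipate is exactly this last passage from the averaged operator to a genuine projection: making precise why $(\mathds 1-p)Sp=0$ for a spectral projection $p$ of $E(P)$, which should come from the fact that the family of invariant subspaces for the bimodule $S$ is closed under the relevant lattice operations (intersections, spans, and limits), so that the "invariant projections in $B(H)$" form a von Neumann lattice on which $E$ restricted behaves well; alternatively one can sidestep averaging entirely by taking $P$ with $Q\mathcal H$ of minimal dimension among all non-trivial invariant subspaces and showing minimality forces $Q \in \M$ via the bimodule property. I would try the minimal-invariant-subspace route first, as it avoids integration and keeps everything finite-dimensional and explicit.
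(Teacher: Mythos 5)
Your starting observation is correct: since $S$ is an $\M'$-bimodule, the set of projections $Q$ with $(\mathds{1}-Q)SQ=0$ is stable under conjugation by unitaries of $\M'$. But both of the routes you propose for converting $P$ into a projection in $\M$ break down. For the averaging route, the first problem is that your conditional expectation $E(P)$ can fail to have \emph{any} non-trivial spectral projection even though $E(P)\neq 0,\mathds{1}$: for $\M=\mathbb{C}^2$ acting diagonally on $\mathbb{C}^2$ and $P$ the projection onto $\mathbb{C}(e_1+e_2)$ one gets $E(P)=\tfrac12\mathds{1}$, so checking ``$E(P)$ is neither $0$ nor $\mathds{1}$'' is not the right safeguard. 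The second problem is the one you flag yourself and which is genuine: the invariant projections of $S$ form a lattice, not a von Neumann algebra, so spectral projections of an average of invariant projections need not be invariant; and while the lattice operations applied to the orbit $\{uPu^{\ast}\}$ do produce projections in $\M$, the join can be $\mathds{1}$ and the meet $0$ simultaneously, so they need not be non-trivial. The minimal-dimension route also fails: take $\M=\mathbb{C}^4=D_4$ and $S=\operatorname{span}\{e_{11},e_{22},e_{12},e_{21}\}\subset M_4$; then $\mathbb{C}(e_3+e_4)$ is a one-dimensional, hence minimal, $S$-invariant subspace whose orthogonal projection does not lie in $D_4$. Minimality does not force compatibility with $\M$.

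The missing idea is to work with algebras rather than with individual invariant subspaces. Invariant subspaces of $S$ coincide with those of the (possibly non-unital) algebra $B$ generated by $S$, and the crucial step is to show that if $B\neq B(H)$ then $B+\M'$ is still a proper subalgebra of $B(H)$. This uses the specific representation: here $\M'=\bigoplus_{a}\mathbb{C}\mathds{1}_{n_a}$ lives in the diagonal blocks, so if $B+\M'=B(H)$ then $B$ contains every off-diagonal block, and products of off-diagonal blocks recover the diagonal ones, forcing $B=B(H)$. Once $B+\M'$ is known to be a proper \emph{unital} algebra containing the $\ast$-algebra $\M'$, Burnside's theorem gives it a non-trivial invariant subspace, and invariance under the $\ast$-closed set $\M'$ forces the corresponding projection to commute with $\M'$, i.e.\ to lie in $\M''=\M$. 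Your proposal never passes to the generated algebra, and the examples above show that no amount of averaging or minimizing over the invariant subspaces of $S$ alone can reach the conclusion.
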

\begin{proof}
Let $B$ be the (possibly non-unital) algebra generated by $S$, and note that for any projection $P\in B(H)$, $(\mathds{1} - P)SP=0$ if and only if $(\mathds{1} - P)BP=0$. Suppose that $\mathds{1} \in B$, then $\M' \subset B$, since $B$ is an $\M'$-bimodule. If the projection $P \in B(H)$ provides an invariant subspace for $B$, then it also has to be invariant for $\M' \subset B$, hence $P \in \M$. 

So there might be a problem coming from non-unitality. We will show that if $B\subset B(H)$ is a proper subalgebra then $\widetilde{B}:= B + \M'$ is also a proper subalgebra of $B(H)$, which contains $\M'$. It will follow from the unital case that there exists a non-trivial projection $p \in \M$ such that $(\mathds{1} -p)\widetilde{B}p=0$, in particular $(\mathds{1} - p)Sp=0$. Suppose that $\widetilde{B} = B(H)$. In this particular representation we have $\M' = \text{span}\{\mathds{1}_{n_{a}}: a \in \{1,\dots, d\}\}$, i.e. it is spanned by the units of the blocks of $\M$. We define $S_{ab}:= \mathds{1}_{n_b} S \mathds{1}_{n_{a}}$. Because $\mathds{1} = \sum_{a=1}^{d} \mathds{1}_{n_{a}}$, we conclude that $S = \bigoplus_{a,b=1}^{d} S_{ab}$. Therefore, any $\M'$-bimodule $S$ splits as a direct sum of blocks $S_{ab} \subset B(\mathbb{C}^{n_{a}}, \mathbb{C}^{n_{b}})$. Let us take a look at any element $x$ from the $ab$-block in $B(H)$ for $a\neq b$. As $\widetilde{B} = B(H)$, we have $x \in B + \M'$, i.e. $x=b+m'$. Since $x$ belongs to the $ab$-block, we have $x = \mathds{1}_{n_{b}}x \mathds{1}_{n_{a}}$, so $x = \mathds{1}_{n_{b}}b\mathds{1}_{n_{a}} + \mathds{1}_{n_{b}}m'\mathds{1}_{n_{a}}$. Note that $\mathds{1}_{n_{b}}b\mathds{1}_{n_{a}} \in B$ by the $\M'$-bimodule property and $\mathds{1}_{n_{b}}m'\mathds{1}_{n_{a}}=0$, because $\M'$ is contained in the diagonal blocks. It follows from this argument that for any $a\neq b$ the whole $ab$-block of $B(H)$ is contained in $B$. But $B$ is an algebra and you can easily express any element in the $aa$-block as a product of elements in the blocks $ab$ and $ba$, so we conclude that $B = B(H)$.

To sum up, if $(\mathds{1} - P)S P=0$ for some non-trivial $P \in B(H)$, then the algebra generated by $S$ and $\M'$ is a proper subalgebra of $B(H)$, so there exists a non-trivial projection $p \in \M$ such that $(\mathds{1} - p)S p = 0$.
\end{proof}

\begin{prop}\label{Prop:irredbimod}
Let $\mathcal{G}:=(\M, \psi, A)$ be a quantum graph. Then $A$ is irreducible if and only if the algebra generated by the corresponding quantum relation $S\subset B(H)$ is equal to $B(H)$. It means that a quantum graph is strongly connected if and only if the algebra generated by the quantum relation $S$ is equal to $B(H)$.
\end{prop}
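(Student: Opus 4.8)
The plan is to assemble the three preceding lemmas. Lemma~\ref{Lem:irred}, applied to $A$ (which is completely positive since we assume quantum graphs are real), reduces irreducibility of $A$ to a condition on a single projection $p\in\M$; Lemmas~\ref{lem:Burnside} and~\ref{lem:projectioninalgebra} reduce the statement ``the algebra generated by $S$ equals $B(H)$'' to a parallel condition on a single projection $p\in\M$. The only genuinely new work is to match the two conditions.

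Fix the representation of $\M$ on $H=\bigoplus_a\mathbb{C}^{n_a}$ from Proposition~\ref{prop:qadjacencyfrombimodule}, with corresponding $\M'$-bimodule $S=\bigoplus_{a,b}S_{ab}$, write $A=\bigoplus_{a,b}A_{ab}$ accordingly, and fix a non-trivial projection $p=\bigoplus_a p_a\in\M$. The key claim is
\[
A(p)(\mathds{1}-p)=0\quad\Longleftrightarrow\quad(\mathds{1}-p)Sp=0 .
\]
Since $\psi$ is tracial, Proposition~\ref{prop:qadjacencyfrombimodule} gives a Kraus form $A_{ab}(x)=c_{ab}\sum_i S^i_{ab}\,x\,(S^i_{ab})^{\ast}$ with $c_{ab}>0$ and $(S^i_{ab})_i$ an orthonormal basis of $S_{ab}$. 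Decomposing into blocks, $A(p)(\mathds{1}-p)=0$ says $\sum_a A_{ab}(p_a)(\mathds{1}_{n_b}-p_b)=0$ for each $b$; compressing by $\mathds{1}_{n_b}-p_b$ and using that each $A_{ab}(p_a)$ is positive, this is equivalent to $(\mathds{1}_{n_b}-p_b)A_{ab}(p_a)(\mathds{1}_{n_b}-p_b)=0$ for all $a,b$, i.e. to $\sum_i\big[(\mathds{1}_{n_b}-p_b)S^i_{ab}p_a\big]\big[(\mathds{1}_{n_b}-p_b)S^i_{ab}p_a\big]^{\ast}=0$ (using $p_a=p_a p_a^{\ast}$), i.e. to $(\mathds{1}_{n_b}-p_b)S^i_{ab}p_a=0$ for all $i,a,b$, which is precisely $(\mathds{1}-p)Sp=0$. (Outside the tracial case the same argument works after replacing $S$ by the bimodule $\bigoplus_{a,b}\rho_b^{-1/4}S_{ab}\rho_a^{1/4}$ spanned by the Kraus operators of $A$, which generates $B(H)$ exactly when $S$ does.)

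Now combine. By Lemma~\ref{Lem:irred} (equivalence of \eqref{irr1} and \eqref{irr5}), $A$ is reducible iff some non-trivial $p\in\M$ satisfies $A(p)(\mathds{1}-p)=0$; by the claim this holds iff some non-trivial $p\in\M$ satisfies $(\mathds{1}-p)Sp=0$, i.e. condition~\eqref{Disconnected} of Lemma~\ref{lem:Burnside} in the chosen representation. Lemma~\ref{lem:projectioninalgebra} gives \eqref{Invariantsubspace}$\Rightarrow$\eqref{Disconnected} in this representation (the converse being immediate), and Lemma~\ref{lem:Burnside} gives \eqref{Burnside}$\Leftrightarrow$\eqref{Invariantsubspace} with each condition independent of the representation; hence the existence of such a $p$ is equivalent to the algebra generated by $S$ being a proper subalgebra of $B(H)$. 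Therefore $A$ is irreducible iff the algebra generated by $S$ equals $B(H)$. Finally, $\G$ is strongly connected iff no non-trivial projection $p\in\M$ satisfies $L_pAL_p=AL_p$, which is condition~\eqref{irr4} of Lemma~\ref{Lem:irred} and hence equivalent to irreducibility of $A$; this gives the last sentence of the proposition.

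The sole step carrying any content is the displayed equivalence, whose proof is bookkeeping with the Kraus decomposition together with the elementary fact that a finite sum of positive operators vanishes iff each summand does, so I expect no real obstacle. The proposition is essentially organizational, repackaging Lemmas~\ref{Lem:irred}, \ref{lem:Burnside}, and~\ref{lem:projectioninalgebra} into the clean dictionary entry ``strongly connected $\Leftrightarrow$ $S$ generates $B(H)$''.
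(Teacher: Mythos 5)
Your proof is correct and follows essentially the same route as the paper: fix the standard representation, use the Kraus form of $A$ from Proposition~\ref{prop:qadjacencyfrombimodule}, and match reducibility of $A$ against the invariant-subspace condition for $S$, then assemble Lemmas~\ref{Lem:irred}, \ref{lem:Burnside} and~\ref{lem:projectioninalgebra}. The only (harmless, arguably cleaner) deviation is that you derive both directions of the key equivalence from condition~\eqref{irr5} and the fact that a vanishing sum of positive operators has vanishing summands, whereas the paper's ``reducible $\Rightarrow$ invariant subspace'' direction instead invokes the independence of the span of Kraus operators from the choice of Kraus decomposition.
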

\begin{proof}
We already know from the previous lemmas that we can choose a specific representation $\M \subset B(H)$ and for $\M \simeq \bigoplus_{a=1}^{d} M_{n_a}$ we choose $H:= \bigoplus_{a=1}^{d} \mathbb{C}^{n_a}$. We already established that the algebra generated by $S$ is equal to $B(H)$ if and only if there is no non-trivial projection $p\in \M$ such that $(\mathds{1} - p)Sp = 0$.

Suppose that there exists a $p\in \mathcal{M}$ such that $(\mathds{1} - p)Sp=0$. As $\mathcal{M} \simeq \bigoplus_{a=1}^{d} M_{n_{a}}$, we have $p = \bigoplus p_{a}$. We conclude that $(\mathds{1}_{n_{b}} - p_b)S_{ab} p_{a} = 0$. The associated quantum adjacency matrix $A: \mathcal{M} \to \mathcal{M}$ splits into blocks $A = \bigoplus_{a,b=1}^{d} A_{ab}$, where $A_{ab}: M_{n_{a}} \to M_{n_{b}}$. Recall the formula for $A_{ab}$ from Proposition \ref{prop:qadjacencyfrombimodule} (used in the tracial case here):
\[
A_{ab} (x) = \sum_{i} S_{ab}^{i} x (S_{ab}^{i})^{\ast},
\]
where $(S_{ab}^{i})_{i}$ is an orthonormal basis of $S_{ab}$. It follows from our assumptions that each $S_{ab}^{i}$ satisfies $S_{ab}^{i} p_{a} = p_{b} S_{ab}^{i} p_{a}$, so we get that $A_{ab}(p_{a} x p_{a}) = p_{b} A_{ab}(p_{a} x p_{a}) p_{b}$, which means that $A(pxp) = pA(pxp)p$, hence $A$ is not irreducible.

Assume now that $A$ is not irreducible, that is $A(pxp) = pA(pxp)p$ for some non-trivial projection $p\in \mathcal{M}$. Splitting it into blocks, we obtain two Kraus decompositions
\[
\sum_{i} S_{ab}^{i} p_{a}x p_{a} (S_{ab}^{i})^{\ast} = \sum_{i}p_{b} S_{ab}^{i} p_{a} x p_{a} (S_{ab}^{i})^{\ast} p_{b}.
\]
Since the span of the Kraus operators is independent of the Kraus decomposition, it follows that $S_{ab}p_{a} = p_{b} S_{ab}p_{a}$. Combining all the blocks, we conclude that $Sp = pSp$, i.e. $S$ admits a common invariant subspace, hence the algebra it generates cannot be equal to $B(H)$.
\end{proof}
Using Lemma \ref{Lem:irred}, in the undirected case we already see most of the statements appearing in Theorem \ref{Thm:mainequivalence}. We just need to cover the equivalences pertaining to different types of homomorphisms between quantum graphs. We will rephrase the definition from \cite{MRV} to resemble that of \cite{Matsuda2}. Then we will show that in the special case that is of interest to us these definitions coincide.
\begin{prop}
Let $\G_1 := (\M_1, \psi_1, A_1)$ and $\G_2 := (\M_2, \psi_2, A_2)$ be undirected quantum graphs. A unital $\ast$-homomorphism $f: \M_2 \to \M_1$ satisfies $f A_2 f^{\ast} \bullet A_1 =  A_1 $ if and only if it is a homomorphism from $\G_1$ to $\G_2$ in the sense of \cite[Definition V.4]{MRV}.
\end{prop}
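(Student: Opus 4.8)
The plan is to treat this as a translation statement: one unwinds Definition V.4 of \cite{MRV} from its string-diagram formalism into the operator language used here and checks that, once the underlying ``quantum function'' is specialised to a genuine unital $\ast$-homomorphism $f\colon\M_2\to\M_1$, the edge-preservation requirement of \cite{MRV} becomes exactly $fA_2f^{\ast}\bullet A_1=A_1$. The cleanest route, I think, is to pass to Choi matrices. For a real quantum graph $\G_i=(\M_i,\psi_i,A_i)$, Proposition \ref{prop:Choiproperties} shows that $P_i:=\mathrm{Choi}(A_i)$ is a self-adjoint idempotent, i.e.\ a projection in $\M_i\otimes\M_i^{\mathrm{op}}$; this is the ``edge projection'' of $\G_i$, and for a classical graph on $D_n$ it is literally the $0/1$ indicator of the edge set inside $\C^n\otimes\C^n$. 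Since $A\mapsto\mathrm{Choi}(A)$ is a bijection turning $\bullet$ into the product in $\M_1\otimes\M_1^{\mathrm{op}}$, the identity $fA_2f^{\ast}\bullet A_1=A_1$ is equivalent to $\mathrm{Choi}(fA_2f^{\ast})\,P_1=P_1$.

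The key technical point is then the identity $\mathrm{Choi}(fA_2f^{\ast})=(f\otimes f^{\mathrm{op}})(P_2)$. To prove it I would use, in order: (i) triviality of the modular group in the tracial setting, so that $\mathrm{Choi}(B)=(B\otimes\mathrm{id})m_1^{\ast}(\unit)$ for $B\colon\M_1\to\M_1$; (ii) the Frobenius reciprocity identity $(f^{\ast}\otimes\mathrm{id})m_1^{\ast}(\unit)=(\mathrm{id}\otimes f)m_2^{\ast}(\unit)$, which follows from $f$ being a unital $\ast$-homomorphism together with traciality, as it amounts to $\psi_2(xf^{\ast}(y))=\psi_1(f(x)y)$ for all $x\in\M_2$, $y\in\M_1$; and (iii) multiplicativity $f m_2=m_1(f\otimes f)$. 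Combining these gives $\mathrm{Choi}(fA_2f^{\ast})=(fA_2f^{\ast}\otimes\mathrm{id})m_1^{\ast}(\unit)=(fA_2\otimes f)m_2^{\ast}(\unit)=(f\otimes f^{\mathrm{op}})(A_2\otimes\mathrm{id})m_2^{\ast}(\unit)=(f\otimes f^{\mathrm{op}})(P_2)$. Because $f\otimes f^{\mathrm{op}}$ is a $\ast$-homomorphism, $(f\otimes f^{\mathrm{op}})(P_2)$ is again a projection, so $\mathrm{Choi}(fA_2f^{\ast})\,P_1=P_1$ is equivalent to the projection inequality $P_1\leqslant(f\otimes f^{\mathrm{op}})(P_2)$. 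On the other side, unwinding Definition V.4 of \cite{MRV} shows that ``$f$ is a homomorphism $\G_1\to\G_2$'' says precisely that the edge projection of $\G_1$ is dominated by the $(f\otimes f)$-pullback of the edge projection of $\G_2$, i.e.\ $P_1\leqslant(f\otimes f^{\mathrm{op}})(P_2)$; this matches, and the two implications are the same computation read in the two directions. (If one prefers to keep the formulation of \cite{MRV} in terms of the adjacency operators rather than edge projections, the same identity $\mathrm{Choi}(fA_2f^{\ast})=(f\otimes f^{\mathrm{op}})\mathrm{Choi}(A_2)$ still furnishes the bridge.)

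The step I expect to be the main obstacle is the Frobenius reciprocity identity in (ii), precisely because $f$ is multiplicative but $f^{\ast}$ is \emph{not}: one cannot slide $f^{\ast}$ past the comultiplications $m_i^{\ast}$, so the computation must be organised so that only honest copies of $f$ cross the multiplication vertices, with $f^{\ast}$ appearing solely against $m_1^{\ast}(\unit)$, where traciality makes it disappear. A minor but necessary preliminary is to confirm that the axioms for a quantum graph in \cite{MRV} coincide with ``real undirected quantum graph'' in the present sense (KMS-self-adjointness of $A$ and the quantum Schur idempotent relation), which is furnished by the equivalence of the two models recalled in the introduction and so only needs to be invoked. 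I would also note explicitly that Definition V.4 of \cite{MRV} does not impose that $f$ be state-preserving, consistently with the fact that the identity in (ii) used only traciality and not $\psi_1\circ f=\psi_2$; already for classical graphs a non-injective vertex map gives a $\ast$-homomorphism $f$ that fails to preserve the counting states yet is a bona fide graph homomorphism.
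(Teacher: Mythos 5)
Your computational core is sound and is in fact the same engine the paper uses: the identity $(f^{\ast}\otimes\mathrm{id})m_1^{\ast}(\mathds{1})=(\mathrm{id}\otimes f)m_2^{\ast}(\mathds{1})$ (which the paper derives as the adjoint of $\psi\circ m(f\otimes\mathrm{id})=\psi\circ m(\mathrm{id}\otimes f^{\ast})$, i.e.\ ``adjoint equals transpose'' for a $\ast$-preserving map), together with multiplicativity of the Choi map under $\bullet$. Your chain $\mathrm{Choi}(fA_2f^{\ast})=(f\otimes f^{\mathrm{op}})\mathrm{Choi}(A_2)$ is correct, and it gives a complete proof that $fA_2f^{\ast}\bullet A_1=A_1$ is equivalent to the projection inequality $P_1\leqslant(f\otimes f^{\mathrm{op}})(P_2)$. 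Incidentally, the step you single out as the likely obstacle, item (ii), is not one: it needs only that $f$ is $\ast$-preserving plus traciality, exactly as you describe.

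The gap is the other half of the equivalence, which you assert rather than prove: ``unwinding Definition V.4 of \cite{MRV} shows that $f$ is a homomorphism iff $P_1\leqslant(f\otimes f^{\mathrm{op}})(P_2)$.'' That unwinding is the actual content of the proposition, and it is not the literal form of the definition. As the paper records it, the condition in \cite{MRV} is an identity between \emph{maps}, namely $P_{A_1}(f\otimes f)P_{A_2}=P_{A_1}(f\otimes f)$, where $P_{A}:=(m\otimes\mathrm{id})(\mathrm{id}\otimes A\otimes\mathrm{id})(\mathrm{id}\otimes m^{\ast})$ is the bimodular projection on $\M\otimes\M$ encoding the edges, not the Choi matrix. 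The missing arguments are: (a) both sides of that identity are compositions of bimodular maps with $\ast$-homomorphisms, hence of the form $x\otimes y\mapsto f(x)\,\Phi(\mathds{1}\otimes\mathds{1})\,f(y)$, and are therefore determined by their values at $\mathds{1}\otimes\mathds{1}$; and (b) those values are respectively $\mathrm{Choi}(fA_2f^{\ast}\bullet A_1)$ (obtained from your identity (ii) plus coassociativity $(\mathrm{id}\otimes m^{\ast})m^{\ast}=(m^{\ast}\otimes\mathrm{id})m^{\ast}$) and $\mathrm{Choi}(A_1)$. With (a) and (b) supplied, your Choi-matrix computation closes the loop; without them you have only shown that your preferred reading of \cite{MRV} is equivalent to the $\bullet$-product formula, not that the definition in \cite{MRV} is.
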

\begin{proof}
We start by recalling the definition from \cite{MRV}. For a quantum adjacency matrix $A$ on $(\M,\psi)$ one can define a bimodular projection on $\M \otimes \M$ given by $P_{A}:= (m\otimes id)(id \otimes A \otimes id)(id \otimes m^{\ast})$. A $\ast$-homomorphism $f: \M_2 \to \M_1$ is a homomorphism from $\G_1$ to $\G_2$ if \begin{equation}
    P_{A_1} (f\otimes f) P_{A_2} = P_{A_1} (f\otimes f).\label{hom}
\end{equation}  Note that the maps on both sides are compositions of bimodular maps and $\ast$-homomorphisms, so if we denote these maps by $\Phi_1$ and $\Phi_2$, we get $\Phi_1(x\otimes y) = f(x) \Phi_1(\mathds{1} \otimes \mathds{1})f(y)$ and $\Phi_2(x\otimes y) = f(x) \Phi_2(\mathds{1} \otimes \mathds{1})f(y)$, hence it suffices to check equality at the unit $\mathds{1} \otimes \mathds{1}$.
%Note that both sides are compositions of bimodular maps and $\ast$-homomorphisms, so if we denote either of these maps by $\Phi$, we get $\Phi(x\otimes y) = f(x) \Phi(\mathds{1} \otimes \mathds{1})f(y)$, hence it suffices to check equality at the unit $\mathds{1} \otimes \mathds{1}$.

The right-hand side evaluated at the unit is equal to $(A_1\otimes id)m^{\ast}(\mathds{1})$, which is the Choi matrix of $A_1$ (see Definition \ref{def:Choi}). The left-hand side is a bit more complicated. The formula is 
\[(m\otimes id)(id \otimes A_1 \otimes id)(id \otimes m^{\ast})(f\otimes f)(A_2 \otimes id)m^{\ast}(\mathds{1}).
\]
We use the equality $(id \otimes f)m^{\ast}(\mathds{1}) = (f^{\ast} \otimes id)m^{\ast}(\mathds{1})$, which is just the adjoint of the equality $\psi\circ m(f\otimes id) = \psi \circ m(id \otimes f^{\ast})$; this equality says that the adjoint of $f$ is equal to the transpose of $f$, as $f$ is $\ast$-preserving. Then $(f\otimes f)(A_2\otimes id)m^{\ast}(\mathds{1})=(f\otimes id)(A_2\otimes id)(id\otimes f)m^{\ast}(\mathds{1})=(f\otimes id)(A_2\otimes id)(f^{\ast}\otimes id)m^{\ast}(\mathds{1})$. We can therefore rewrite our formula as follows
\[(m\otimes id)(id \otimes A_1 \otimes id)(id \otimes m^{\ast})(fA_2 f^{\ast} \otimes id)m^{\ast}(\mathds{1}).
\]
We now use the coassociativity of $m^{\ast}$, i.e. $(id \otimes m^{\ast}) m^{\ast} = (m^{\ast}\otimes id)m^{\ast}$ to arrive at
\[
(m\otimes id)(id \otimes A_1 \otimes id)(fA_2 f^{\ast} \otimes id\otimes id)(m^{\ast}\otimes id)m^{\ast}(\mathds{1}).
\]
This is equal to the Choi matrix of $fA_2 f^{\ast}\bullet A_1$,  so we conclude that $fA_2 f^{\ast} \bullet A_1 = A_1$ if and only if \eqref{hom} holds.
\end{proof}
\begin{prop}\label{prop:homomorphismconnected}
Let $\G := (\M, \psi, A)$ be a quantum graph and let $T:= (\C^2, \mu, id)$ be the trivial graph on two vertices. Then there exists a surjective graph homomorphism from $\G$ to $T$ in the sense of \cite[Definition V.4]{MRV} if and only it exists in the sense of \cite[Definition 3.1]{Matsuda2}. Both of these conditions are equivalent to reducibility of $A$.
\end{prop}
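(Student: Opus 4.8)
The plan is to deduce both equivalences from the preceding proposition together with Lemma~\ref{Lem:irred}, by showing that each of the two homomorphism conditions is equivalent to reducibility of $A$. First I would record that an injective unital $\ast$-homomorphism $f\colon\C^2\to\M$ is precisely the datum of a non-trivial projection $p:=f(1,0)\in\M$, via $f=f_p$ with $f_p(s,t)=sp+t(\mathds{1}-p)$, and that a graph homomorphism $\G\to T$ in the sense of \cite[Definition V.4]{MRV} is surjective exactly when its underlying $\ast$-homomorphism $f\colon\C^2\to\M$ is injective (a quantum function into a classical two-point set being the same as such an $f$). Applying the preceding proposition with $\G_2=T=(\C^2,\mu,\mathrm{id})$ --- so that $A_2=\mathrm{id}$ and $fA_2f^{\ast}=ff^{\ast}$ --- a surjective \cite{MRV}-homomorphism $\G\to T$ exists if and only if there is a non-trivial projection $p\in\M$ with $(f_pf_p^{\ast})\bullet A=A$.

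Next I would run two short computations in the tracial setting; write $q=\mathds{1}-p$. Using $f_pf_p^{\ast}(y)=\psi(py)\,p+\psi(qy)\,q$, a direct evaluation of Definition~\ref{def:Choi} gives
\[
\mathrm{Choi}(f_pf_p^{\ast})=p\otimes p^{\mathrm{op}}+q\otimes q^{\mathrm{op}},
\]
an orthogonal projection; $\mathrm{Choi}(A)$ is also an orthogonal projection (as $A\bullet A=A$ and $A$ is real) and $\mathrm{Choi}(A)\geqslant 0$. Since $\mathrm{Choi}(\,\cdot\,)$ is multiplicative for the Schur product (Proposition~\ref{prop:Choiproperties}), the condition $(f_pf_p^{\ast})\bullet A=A$ amounts to $\mathrm{Choi}(A)\leqslant p\otimes p^{\mathrm{op}}+q\otimes q^{\mathrm{op}}$, and a routine positivity argument (compress by the subprojection $p\otimes q^{\mathrm{op}}$ of the complement, using $\mathrm{Choi}(A)\geqslant 0$) shows this is equivalent to $\mathrm{Choi}(A)(p\otimes q^{\mathrm{op}})=0$. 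For the second computation one identifies the map with Choi matrix $p\otimes q^{\mathrm{op}}$, which is $E_{p,q}(y)=\psi(qy)\,p$, and checks that $(A\bullet E_{p,q})(y)=A(yq)\,p$. Hence the homomorphism condition becomes $A(yq)\,p=0$ for all $y\in\M$, i.e.\ $A(yq)=A(yq)q$ for all $y$, which by Lemma~\ref{Lem:irred} says exactly that $A$ is reducible with respect to $q$; and since $A$ is KMS-symmetric and $L_q=\mathds{1}-L_p$ on $L^2(\M,\psi)$, reducibility with respect to $q$ is equivalent to reducibility with respect to $p$ by the last part of Lemma~\ref{Lem:irred}. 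This proves that a surjective \cite{MRV}-homomorphism $\G\to T$ exists if and only if $A$ is reducible.

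For the \cite{Matsuda2}-condition I would make \cite[Definition 3.1]{Matsuda2} explicit for the classical target $T$: such a surjective homomorphism is again given by a non-trivial projection $p$ (via $f_p$), and its defining condition, specialized to $T$ (whose quantum adjacency matrix is $\mathrm{id}$), asserts that the $2\times 2$ matrix $f_p^{\ast}Af_p$ is diagonal, equivalently --- by self-adjointness of $A$ --- that $\psi(q\,A(p))=0$. Since $A(p)\geqslant 0$ (reality of $A$) and $\psi$ is faithful and tracial, $\psi(q\,A(p))=\psi(q\,A(p)\,q)=0$ forces $qA(p)q=0$, hence $A(p)q=0$, which by Lemma~\ref{Lem:irred} is reducibility of $A$ with respect to $p$; the converse implication is immediate. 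Combining the two parts, the existence of a surjective graph homomorphism $\G\to T$ in the sense of \cite{MRV} or of \cite{Matsuda2} is equivalent to reducibility of $A$, so in particular the two notions coincide.

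All the conceptual input comes from the preceding proposition and Lemma~\ref{Lem:irred}; the main obstacle is bookkeeping --- getting the two Choi-matrix computations right with the $\M^{\mathrm{op}}$-multiplication (the tracial hypothesis kills the modular factors but not the opposite product), and pinning down \cite[Definition 3.1]{Matsuda2} for a classical target precisely enough to see that it collapses to diagonality of $f_p^{\ast}Af_p$. A slightly different route for the \cite{MRV}-part, closer in spirit to Proposition~\ref{Prop:irredbimod}, is to note that $f_pf_p^{\ast}$ is the quantum adjacency matrix of the quantum relation $pB(H)p\oplus qB(H)q$, so that $(f_pf_p^{\ast})\bullet A=A$ says precisely that the quantum relation $S$ of $A$ lies in $pB(H)p\oplus qB(H)q$, i.e.\ $(\mathds{1}-p)Sp=0$ after using self-adjointness of $S$; reducibility then follows from Proposition~\ref{Prop:irredbimod}.
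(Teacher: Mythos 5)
Your argument is correct and follows essentially the same route as the paper: reduce via the preceding proposition to the condition $ff^{\ast}\bullet A=A$, compute $\mathrm{Choi}(ff^{\ast})=p\otimes p^{\mathrm{op}}+q\otimes q^{\mathrm{op}}$, translate the resulting Choi-matrix identity into reducibility via Lemma~\ref{Lem:irred}, and for the Matsuda condition argue exactly as the paper does that vanishing traces of products of positive elements force the products themselves to vanish. The only difference is mechanical: where you pass through the auxiliary map $E_{p,q}$ with Choi matrix $p\otimes q^{\mathrm{op}}$ to obtain the two-way equivalence with $A(yq)p=0$ (note that reducing $Q\leqslant P$ to the vanishing of just one off-diagonal compression tacitly uses the flip-invariance of $\mathrm{Choi}(A)$ coming from undirectedness), the paper instead proves the forward implication by the bimodularity computation $\mathrm{Choi}(A)=(A\otimes id)m^{\ast}(p_1^2+p_2^2)$ and the converse via the order inequality $ff^{\ast}\geqslant A$ between completely positive maps evaluated at $p_i$ --- both are equally valid.
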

\begin{proof}
We know from the previous proposition that in both definitions there is an injective,
 unital $\ast$-homomorphism $f: \mathbb{C}^2 \to \M$, such that either $ff^{\ast} \bullet A = A$ (\cite{MRV}) or $id \bullet f^{\ast}Af = f^{\ast}Af$ (\cite{Matsuda2}). The injective, unital $\ast$-homomorphism is just a choice of two non-trivial projections $p_1, p_2 \in \M$ such that $p_1 + p_2 = \mathds{1}$, and then $f(e_i) = p_i$. It follows that $f^{\ast}(x) = \sum_{i=1}^{2} e_i \psi(p_i x)$, therefore $ff^{\ast}(x) = \sum_{i=1}^{2} p_i \psi(p_i x)$; this map is completely positive because $\psi$ is tracial. Note that the equality $ff^{\ast} \bullet A = A$ is equivalent to the equality  $\mathrm{Choi}(ff^{\ast}) \mathrm{Choi}(A) = \mathrm{Choi}(A)$ for Choi matrices (see Proposition \ref{prop:Choiproperties}). Let us compute $\mathrm{Choi}(ff^{\ast}) := (ff^{\ast}\otimes id)m^{\ast}(\mathds{1})$. Note that $ff^{\ast} = \sum_{i=1}^{2} L_{p_i} K L_{p_{i}}$, where $K(x) = \psi(x)\mathds{1}$ is the quantum adjacency matrix of the complete quantum graph. Therefore we obtain
 \[
 \mathrm{Choi}(ff^{\ast}) = \sum_{i=1}^{2} (L_{p_i} \otimes id)(K L_{p_i} \otimes id)m^{\ast}(\mathds{1}).
 \]
Since the Choi matrix of the transpose is equal to the tensor flip of the original Choi matrix, we get
\[
(K L_{p_i} \otimes id)m^{\ast}(\mathds{1}) = (id \otimes R_{p_i} K)m^{\ast}(\mathds{1}).
\]
Therefore $\mathrm{Choi}(ff^{\ast}) = \sum_{i=1}^{2}(L_{p_i} \otimes R_{p_i}) \mathrm{Choi}(K) = p_1 \otimes p_1^{op} + p_2 \otimes p_2^{op}$, because $\mathrm{Choi}(K)$ is the identity matrix in $\M \otimes \M^{op}$. The equality $ff^{\ast} \bullet A = A$ is thus equivalent to  $\mathrm{Choi}(A) = (p_1 \otimes p_1^{op} + p_2 \otimes p_2^{op}) \mathrm{Choi}(A)$.

 Assume that $A$ is not irreducible, so by Lemma \ref{Lem:irred} there are two projections $p_1$ and $p_2$ summing to $\mathds{1}$ such that $A(p_{i} x) 
 = p_i A(x)$. The Choi matrix $\mathrm{Choi}(A) = (A\otimes id)m^{\ast}(\mathds{1})$ satisfies $\mathrm{Choi}(A) = (A\otimes id)m^{\ast}(p_1^2 + p_2^2)$. Since $m^{\ast}$ is a bimodular map, this is equal to 
\[
\sum_{i=1}^{2}(A L_{p_i} \otimes R_{p_i})m^{\ast} (\mathds{1}) = \sum_{i=1}^{2}(L_{p_i} A \otimes R_{p_i})m^{\ast} (\mathds{1}),
\]
where we used the fact that $L_{p_i} A = A L_{p_{i}}$. It follows that $\mathrm{Choi}(A) = (p_1 \otimes p_1^{op} + p_2 \otimes p_2^{op}) \mathrm{Choi}(A)$, so $ff^{\ast} \bullet A = A$.

If $ff^{\ast} \bullet A = A$, then $\mathrm{Choi}(ff^{\ast}) \mathrm{Choi}(A) = \mathrm{Choi}(A)$, implies that $\mathrm{Choi}(ff^{\ast}) \geqslant \mathrm{Choi}(A)$, because $\mathrm{Choi}(A)$ is a projection. It follows that $ff^{\ast} \geqslant A$, where this is an inequality between completely positive maps. It follows that $p_i \psi(p_i) = ff^{\ast}(p_i) \geqslant A(p_i)$, so $A$ is not irreducible.

What happens in the definition of disconnectedness from \cite{Matsuda2}? It is equivalent for a quantum adjacency matrix $A$ to the equalities $\psi(p_1 A(p_2)) = \psi(p_2 A(p_1))=0$ (see \cite[Proof of Theorem 3.7]{Matsuda2}). Since we are in the tracial case, these can only vanish if $p_1 A(p_2) = p_2 A(p_1) = 0$, which by Lemma \ref{Lem:irred} is equivalent to $A(p x) = p A(x)$ (with $p_1=p$), therefore in this special case the definitions are equivalent.
\end{proof}
\begin{proof}[Proof of Theorem \ref{Thm:mainequivalence}]
Equivalence of \eqref{main1} and \eqref{main2} follows from Lemma \ref{Lem:irred}. Equivalence of \eqref{main3}
and \eqref{main4} follows from the combination of Lemma \ref{lem:Burnside} and \ref{lem:projectioninalgebra} (note that in the undirected case the algebra generated by $S$ is a $\mathrm{C}^{\ast}$-algebra). Proposition \ref{Prop:irredbimod} implies that \eqref{main1} and \eqref{main4} are equivalent, which completes the proof that the first four conditions are equivalent.

Condition \eqref{main4} is equvialent to \eqref{main5} because $\mathrm{C}^{\ast}(S)  = \text{span}\{S^k: k\in \mathbb{N}\}$ (which is closed in the finite dimensional case). 

By Proposition \ref{prop:homomorphismconnected} conditions \eqref{main6} and \eqref{main8} are equivalent to each other and to \eqref{main1}. They are also equivalent to \eqref{main7} by \cite[Theorem 4.9]{Matsuda2}.

Condition \eqref{main9} will be discussed in Proposition \ref{prop:Laplaciansimpleeigenvalue}, where we prove that it is equivalent to \eqref{main1} in a more general, non-tracial context.
\end{proof}

It turns out that there are plenty of examples of connected quantum graphs.
\begin{prop}
Let $\G$ be a random quantum graph from the model $QG(n,d)$ introduced in \cite{ChirWas} for $2\leqslant d \leqslant n^2-3$; $\G$ is modeled by a random $(d+1)$-dimensional operator subsystem of $M_n$. Then almost surely $\G$ is connected.
\end{prop}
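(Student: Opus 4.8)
The plan is to show that a random $(d+1)$-dimensional operator subsystem $S \subseteq M_n$ from the model $QG(n,d)$ almost surely generates all of $M_n$ as an algebra, which by Proposition \ref{Prop:irredbimod} (equivalently, the equivalence of conditions \eqref{main1} and \eqref{main4} in Theorem \ref{Thm:mainequivalence}) is exactly what connectivity means here. So the task reduces to a genericity statement: for the parameter range $2 \leqslant d \leqslant n^2 - 3$, a generic self-adjoint subspace containing the identity, of dimension $d+1$, generates $M_n$.

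First I would recall the construction of $QG(n,d)$ from \cite{ChirWas}: one takes $d$ traceless self-adjoint matrices spanning a random $d$-dimensional subspace of the traceless part of $M_n$ (drawn, say, from the Gaussian unitary ensemble or a Haar-type distribution on the Grassmannian), and adjoins the identity. The key observation is that the set of $d$-tuples $(X_1, \dots, X_d)$ of traceless self-adjoint matrices for which $\mathrm{span}\{\mathds{1}, X_1, \dots, X_d\}$ fails to generate $M_n$ is a \emph{proper} real algebraic subvariety of the ambient space; since the distribution of the random subspace is absolutely continuous with respect to Lebesgue measure on the relevant Grassmannian, any proper subvariety has measure zero, giving the almost sure statement. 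So the heart of the argument is to exhibit \emph{one} $(d+1)$-dimensional self-adjoint unital subspace that generates $M_n$; by the semicontinuity/openness of the generating condition (non-generation is a closed, algebraically defined condition), this single example forces the generic behavior.

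For the explicit example I would proceed as follows. When $d+1 \geqslant 2$, i.e. always in our range, I can start from the pair $\{\mathds{1}, X\}$ where $X = \mathrm{diag}(\lambda_1, \dots, \lambda_n)$ has distinct real eigenvalues, so that $\mathrm{span}\{\mathds{1}, X, X^2, \dots, X^{n-1}\}$ equals the full diagonal algebra $D_n$. To generate all of $M_n$ one needs some off-diagonal element; since $d \geqslant 2$ we may include a second traceless self-adjoint matrix $Y$ whose support as a graph (i.e. the pattern of nonzero off-diagonal entries) is that of a connected graph on $n$ vertices — for instance $Y$ with $1$'s on the first super- and sub-diagonal. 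A standard argument (the classical statement that the diagonal algebra together with a self-adjoint matrix whose ``graph'' is connected generates $M_n$, which is itself the $\M = D_n$ case of connectivity $\Rightarrow$ generation) then shows $\mathrm{span}\{\mathds{1}, X, Y\}$ already generates $M_n$. Since $d+1 \geqslant 3$ means we have at least the two degrees of freedom $X, Y$ available, and for $d+1 = 2$ one checks directly that... wait — in our range $d \geqslant 2$ so $d + 1 \geqslant 3$, so we always have room for both $X$ and $Y$; the upper bound $d \leqslant n^2 - 3$ is what keeps $S$ a \emph{proper} subsystem (so that ``$QG(n,d)$'' is a nondegenerate random model rather than forcing $S = M_n$ trivially), and does not interfere with exhibiting a generating example of the required dimension since one can always pad with additional generic traceless self-adjoint matrices without destroying generation.

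The main obstacle I anticipate is not the construction of the example but the measure-theoretic bridge: one must be careful that the distribution used to define $QG(n,d)$ in \cite{ChirWas} genuinely charges an open set around the exhibited example, or more precisely is mutually absolutely continuous with the Lebesgue/Haar measure on the Grassmannian of $d$-planes in the $(n^2-1)$-dimensional traceless self-adjoint space — so that the ``bad'' algebraic subvariety is null. Once that is granted, the argument is: the complement of non-generation is Zariski-open and nonempty (by the example), hence of full measure, hence $\G$ is almost surely connected. I would also remark that the lower bound $d \geqslant 2$ is genuinely needed — with $d = 1$ the subsystem $\mathrm{span}\{\mathds{1}, X\}$ is commutative and generates only a maximal abelian subalgebra, never all of $M_n$ for $n \geqslant 2$, so such a quantum graph is always disconnected.
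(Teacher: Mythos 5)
Your proposal is correct and follows essentially the same route as the paper: reduce connectivity to the statement that $\mathrm{C}^{\ast}(S) = M_n$ (via Proposition \ref{Prop:irredbimod} / Theorem \ref{Thm:mainequivalence}) and then argue that a random operator system of dimension $d+1 \geqslant 3$ generically generates $M_n$. The only difference is that you prove the genericity statement from scratch (Zariski-closedness of non-generation, an explicit generating example, absolute continuity of the Grassmannian measure), whereas the paper simply cites \cite[Lemma 3.10]{ChirWas} for the fact that two generic Hermitian matrices generate the whole matrix algebra.
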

\begin{proof}
It follows from the conditions on $d$ that the operator system $S$ of $\G$ contains two independent Hermitian matrices. It is well-known that such a pair generically generates the whole matrix algebra (see, e.g., \cite[Lemma 3.10]{ChirWas}), so in this case $\mathrm{C}^{\ast}(S) = M_n$, that is $\G$ is connected.
\end{proof}

\section{The non-tracial case}\label{Sec:nontracial}
In this section we assume that $\M$ is equipped with a (possibly) non-tracial functional $\psi$. Recall from Section \ref{sec:prelim} that we can define two inner products on $\M$, the usual GNS inner product and the KMS inner product. When $\tau$ is some faithful trace on $\M$  and $\psi(x) = \tau(\rho x)$, then the KMS inner product can be written as $\langle x,y\rangle_{\mathrm{KMS}} = \tau((\rho^{\frac{1}{4}}x\rho^{\frac{1}{4}})^{\ast} \rho^{\frac{1}{4}}y \rho^{\frac{1}{4}})$. It means that using the positive map $\iota(x):= \rho^{\frac{1}{4}} x \rho^{\frac{1}{4}}$ we can relate the KMS inner product to a tracial inner product. Because of that we will often work with \textbf{KMS implementations}, i.e. for a map $A:\M \to \M$ we will consider $\widetilde{A}(\rho^{\frac{1}{4}}x \rho^{\frac{1}{4}}):= \rho^{\frac{1}{4}}A(x) \rho^{\frac{1}{4}}$. It follows that $\widetilde{A}(x) = \rho^{\frac{1}{4}} A(\rho^{-\frac{1}{4}}x \rho^{-\frac{1}{4}}) \rho^{\frac{1}{4}}$. The advantage of this map is that it is equal to $\iota^{\ast} A \iota$, i.e. it is unitarily conjugate to $A$, but it acts on a tracial $L^2$-space.

In the non-tracial case we also need to adjust the left and right actions of $\M$ on itself. Let $L_x$ and $R_x$ be the operators of left and right multiplication by $x$. In the case of the GNS inner product we have $(L_x)^{\ast} = L_{x^{\ast}}$ but $(R_x)^{\ast} = R_{\sigma_{-i}(x^{\ast})}$. Therefore the right action needs to be twisted by the modular group in order for it to remain a $\ast$-homomorphism; the right regular representation becomes $\rho(x):= R_{\sigma_{-\frac{i}{2}}(x)}$. For the KMS inner product the left regular representation becomes $\lambda(x) = L_{\sigma_{\frac{i}{4}}(x)}$ and for the right regular representation $\rho(x)$ we need to add the modular group action $\sigma_{-\frac{i}{4}}(x)$. We will keep using the notation $L_x$ and $R_x$ for the left and right multiplication operators and $\lambda(x)$ and $\rho(x)$ for the left and right regular representations, both in the GNS and KMS cases. This leads us to the definition of connectedness in the non-tracial case. We phrase our results using the KMS inner product and then we will explain what happens for the GNS inner product.
\begin{defn}
Let $\mathcal{G}:=(\M, \psi, A)$ be an undirected (i.e. KMS symmetric) quantum graph. We say that it is \emph{disconnected} if there exists a non-trivial projection $p\in \M$ such that $\lambda(p)A = A \lambda(p)$, which means that $A$ commutes with the left regular representation of $p$. Equivalently, $\rho(p)A = A\rho(p)$, i.e. $A(x\sigma_{-\frac{i}{4}}(p)) = A(x) \sigma_{-\frac{i}{4}}(p)$ for all $x\in \M$.  
\end{defn}
\begin{rem}\label{Rem:KMSproj}
This definition reduces to Definition \ref{definition: connectivity 1} in the tracial case, because then $\lambda(p) = L_p$.

The condition from the definition is equivalent to $\widetilde{A}(px) = p\widetilde{A}(x)$ or $\widetilde{A}(xp) =\widetilde{A}(x) p$ for all $x\in \M$.  Indeed, let us check it for the second version: $\widetilde{A}(xp) =\widetilde{A}(x) p$ becomes
\[
\rho^{\frac{1}{4}} A(\rho^{-\frac{1}{4}}xp \rho^{-\frac{1}{4}}) \rho^{\frac{1}{4}} = \rho^{\frac{1}{4}} A(\rho^{-\frac{1}{4}} x \rho^{-\frac{1}{4}}) \rho^{\frac{1}{4}} p.
\]
We can multiply both from the left and right by $\rho^{-\frac{1}{4}}$ and denote $y = \rho^{\frac{1}{4}}x \rho^{\frac{1}{4}}$ to arrive at
\[
A(y \rho^{\frac{1}{4}}p \rho^{-\frac{1}{4}}) = A(y) \rho^{\frac{1}{4}}p \rho^{-\frac{1}{4}},
\]
which is precisely $A(y \sigma_{-\frac{i}{4}}(p)) = A(y) \sigma_{-\frac{i}{4}}(p)$. Since $\rho \in \M$, every $y\in \M$ is of the form $y=\rho^{\frac{1}{4}} x \rho^{\frac{1}{4}}$ for some $x\in \M$, so this establishes the equality for all $y\in \M$.
\end{rem}
The analogue of Lemma \ref{Lem:irred} for non-tracial functionals works because we adjusted the left and right regular representations to be $\ast$-homomorphisms.
\begin{prop}\label{prop:Laplaciansimpleeigenvalue}
Let $\mathcal{G}:=(\M,\psi, A)$ be an undirected quantum graph. Then it is connected if and only if $A$ is an irreducible map.
\end{prop}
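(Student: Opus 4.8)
The plan is to reduce the non-tracial statement to the tracial version of Lemma \ref{Lem:irred} by passing to the KMS implementation $\widetilde{A}(x) = \rho^{1/4} A(\rho^{-1/4} x \rho^{-1/4}) \rho^{1/4}$, which by construction acts on the GNS space of a tracial functional (the trace $\tau$ with $\psi = \tau(\rho \cdot)$) and is unitarily conjugate to $A$ via $\iota(x) = \rho^{1/4} x \rho^{1/4}$. First I would record that since $A$ is real (i.e.\ $\ast$-preserving) and $\rho$ is positive, $\widetilde{A}$ is again $\ast$-preserving, and by the remark in Section~\ref{sec:prelim} reality is equivalent to complete positivity, so $\widetilde{A}$ is a completely positive map on the tracial quantum space. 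Then, by Remark~\ref{Rem:KMSproj}, the disconnectedness condition $\lambda(p) A = A \lambda(p)$ for a non-trivial projection $p \in \M$ is equivalent to $\widetilde{A}(px) = p\widetilde{A}(x)$ for all $x$, i.e.\ $\widetilde{A} L_p = L_p \widetilde{A}$ on the tracial $L^2$-space.

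Next I would invoke the tracial part of Lemma~\ref{Lem:irred}: since $\widetilde{A}$ is completely positive on a C*-algebra equipped with a faithful trace and (being KMS-symmetric for $A$, hence symmetric with respect to the tracial inner product after the conjugation by $\iota^\ast = \iota$) self-adjoint with respect to that trace, the condition $\widetilde{A}(px) = p\widetilde{A}(x)$ is equivalent to $\widetilde{A}(p) \leqslant C p$ for some $C > 0$, which is precisely reducibility of $\widetilde{A}$. Finally, because $\widetilde{A} = \iota^\ast A \iota$ with $\iota$ invertible and positivity-preserving in both directions, reducibility of $\widetilde{A}$ transfers to reducibility of $A$: if $p$ is a non-trivial projection with $\widetilde{A}(p\M p) \subseteq p\M p$, then one checks that $A$ leaves invariant the hereditary subalgebra cut out by the corresponding (non-trivial) projection, and conversely. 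Chaining these equivalences, $\G$ is disconnected $\iff$ $\widetilde{A}$ is reducible $\iff$ $A$ is reducible, which is the claim.

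The main technical point to get right is the self-adjointness of $\widetilde{A}$ with respect to the \emph{tracial} inner product: one must verify carefully that KMS-symmetry of $A$ (self-adjointness with respect to $\langle\cdot,\cdot\rangle_{\mathrm{KMS}}$) translates, under conjugation by $\iota$, exactly into $\tau$-self-adjointness of $\widetilde{A}$, since this is what licenses the use of the last clause of Lemma~\ref{Lem:irred}. This is a short computation using $\langle x, y\rangle_{\mathrm{KMS}} = \langle \iota(x), \iota(y)\rangle_\tau$, but it is the hinge of the argument. A secondary subtlety is bookkeeping the twist $\sigma_{-i/4}$ appearing in $\rho(p)$ versus $\lambda(p)$; however this is already handled in Remark~\ref{Rem:KMSproj}, so I would simply cite it rather than reprove it. I expect no genuine obstacle beyond these two points, since once we are on the tracial side the result is exactly Lemma~\ref{Lem:irred} together with the remark following the Perron--Frobenius theorem.
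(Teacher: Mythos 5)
Your proposal is correct and follows essentially the same route as the paper: pass to the KMS implementation $\widetilde{A}$, use Remark \ref{Rem:KMSproj} to identify disconnectedness with $\widetilde{A}L_p = L_p\widetilde{A}$, and apply the tracial, self-adjoint clause of Lemma \ref{Lem:irred} together with the fact that reducibility transfers between $A$ and $\widetilde{A}$ under the conjugation by $\iota$. The "hinge" you flag — that KMS-symmetry of $A$ becomes $\tau$-self-adjointness of $\widetilde{A}$ via $\langle x,y\rangle_{\mathrm{KMS}} = \langle\iota(x),\iota(y)\rangle_\tau$ — is exactly the point the paper uses (implicitly) when it invokes Lemma \ref{Lem:irred} on the tracial $L^2$-space.
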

\begin{proof}
It is easy to verify that $A$ is irreducible if and only if $\widetilde{A}$ is irreducible. If $\mathcal{G}$ is disconnected then by Remark \ref{Rem:KMSproj} $\widetilde{A}$ is reducible, so $A$ is reducible. Assume now that $A$ is reducible. By reducibility of $\widetilde{A}$ we find a projection $p$ such that $\widetilde{A}(xp) = \widetilde{A}(xp)p$, but $\widetilde{A}$ acts on a tracial $L^2$-space, hence by Lemma \ref{Lem:irred} we get $\widetilde{A}(xp) = \widetilde{A}(x)p$, which means that $\mathcal{G}$ is disconnected.
 \end{proof}
For the GNS inner product we can state a result not involving the modular group. Indeed, if a completely positive map is self-adjoint with respect to the GNS inner product, then it commutes with the modular group (see, e.g., \cite[Proposition 2.2]{Wirth}), in particular a GNS symmetric map is automatically KMS symmetric. It follows that the condition $A(x \sigma_{-\frac{i}{4}}(p)) = A(x) \sigma_{-\frac{i}{4}}(p)$ is equivalent to $A(y) p = A(yp)$, where $x= \sigma_{-\frac{i}{4}}(y)$. Therefore we can generalize \cite[Theorem 3.7]{Matsuda2} to the non-regular and non-tracial setting.
\begin{prop}\label{prop:Laplacianconnected}
Let $\mathcal{G}:= (\M, \psi, A)$ be a quantum graph with $A$ self-adjoint with respect to the GNS inner product. Then $\mathcal{G}$ is connected if and only if $0$ is a simple eigenvalue of the quantum graph Laplacian $\Delta$. 
\end{prop}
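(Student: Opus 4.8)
\emph{Proof proposal.} The plan is to combine Proposition~\ref{prop:Laplaciansimpleeigenvalue} with a structural description of $\ker\Delta$. By that proposition $\mathcal G$ is connected exactly when $A$ is irreducible, and since $\Delta$ is GNS-self-adjoint, positive semidefinite and satisfies $\Delta(\mathds 1)=0$, we have $\C\mathds 1\subseteq\ker\Delta$, so that $0$ is a simple eigenvalue of $\Delta$ if and only if $\ker\Delta=\C\mathds 1$. It therefore suffices to show that $A$ is reducible if and only if $\dim\ker\Delta\geq 2$.

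For the forward direction, suppose $\mathcal G$ is disconnected. Since $A$ is GNS-symmetric this means (by the discussion preceding the proposition) that there is a non-trivial projection $p\in\M$ with $A(yp)=A(y)p$ for all $y\in\M$, and hence also, applying $\ast$-preservation, $A(py)=pA(y)$. Taking $y=\mathds 1$ gives $A(p)=A(\mathds 1)p=pA(\mathds 1)$, so $[A(\mathds 1),p]=0$. Consequently the degree term of $\Delta$ also sends $p$ to $A(\mathds 1)p$, and $\Delta(p)=0$. As $p$ is not a scalar, $\mathds 1$ and $p$ are linearly independent elements of $\ker\Delta$, so $\dim\ker\Delta\geq 2$.

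For the converse, the crucial input is that $\ker\Delta$ is a unital $\ast$-subalgebra of $\M$. This follows from the first-order differential calculus attached to the quantum graph: $\Delta=\sum_j\partial_j^{*}\partial_j$, where each $\partial_j(x)=[v_j,x]$ is a $\ast$-derivation of $\M$ into a Hilbert $\M$-bimodule and the jump operators satisfy $A(\mathds 1)=\sum_j w_j v_j^{*}v_j$ with weights $w_j>0$; hence $\ker\Delta=\bigcap_j\ker\partial_j=\{v_j\}'\cap\M$, which is a unital $\ast$-subalgebra. (Equivalently, $\ker\Delta$ is the fixed-point set of the GNS-symmetric quantum Markov semigroup $e^{-t\Delta}$, and since $\psi$ is a faithful invariant state and each $e^{-t\Delta}$ is a Schwarz map, every fixed point lies in its multiplicative domain.) Now if $\dim\ker\Delta\geq 2$ then, being a finite-dimensional $\ast$-algebra strictly larger than $\C\mathds 1$, $\ker\Delta$ contains a non-trivial projection $p$. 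Then $\partial_j(p)=0$ for every $j$, i.e.\ each $v_j$ commutes with $p$, so $A(\mathds 1)=\sum_j w_j v_j^{*}v_j$ commutes with $p$ as well. Combining $[A(\mathds 1),p]=0$ with $\Delta(p)=0$ forces $A(p)=A(\mathds 1)p$, whence $A(p)(\mathds 1-p)=A(\mathds 1)p(\mathds 1-p)=0$; by Lemma~\ref{Lem:irred}, $A$ is reducible and so $\mathcal G$ is disconnected. Chaining these equivalences with Proposition~\ref{prop:Laplaciansimpleeigenvalue} completes the proof.

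The main obstacle is precisely this structural claim: that $\ker\Delta$ is a $\ast$-subalgebra and that every projection in it commutes with $A(\mathds 1)$ — equivalently, that a projection annihilated by the quadratic form of $\Delta$ actually reduces $A$. For a general, non-symmetric difference $D-A$ this is false, and the argument leans essentially on the positivity and GNS-symmetry of $\Delta$, packaged either through the commutator factorization $\Delta=\sum_j\partial_j^{*}\partial_j$ or through the multiplicative-domain property of $e^{-t\Delta}$. In the non-tracial case one must moreover keep track of the modular twists entering both the degree operator and the inner product on the bimodule carrying the $\partial_j$; passing to the KMS implementation $\widetilde A$ makes this bookkeeping transparent.
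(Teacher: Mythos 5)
Your argument is correct and follows the same skeleton as the paper's: reduce to showing that $\ker\Delta$ is a unital $\ast$-subalgebra of $\M$, so that $\dim\ker\Delta\geq 2$ forces a non-trivial projection $p\in\ker\Delta$, which by Lemma~\ref{Lem:irred} witnesses reducibility of $A$ (and conversely). The difference is purely in how the $\ast$-algebra property of the kernel is obtained. The paper works with the gradient itself: by \cite[Proposition 2.3]{Matsuda2} one has $\nabla_A(y)=[A,R_y]$, so $\ker\Delta=\ker\nabla_A=\{y\in\M : AR_y=R_yA\}$ is manifestly a unital algebra (a relative commutant), and $\ast$-closedness follows because $(R_y)^{\ast}=R_{\sigma_{-i}(y^{\ast})}$ and a GNS-symmetric completely positive map commutes with the modular group. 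You instead decompose $\Delta=\sum_j\partial_j^{\ast}\partial_j$ into inner derivations attached to a Kraus decomposition of $A$, getting $\ker\Delta=\{v_j\}'\cap\M$; this is a valid Cipriani--Sauvageot-type argument and it buys you a slightly more concrete picture (the projection $p$ commutes with every jump operator, which immediately gives $(\mathds{1}-p)Sp=0$ and ties in with Proposition~\ref{Prop:irredbimod nT}), but it imports exactly the modular subtleties you flag at the end: in the non-tracial case the $\partial_j$ are twisted commutators and the identity $\expval{y,\Delta y}=\sum_j\norm{\partial_j y}^2$ needs the bimodule inner product set up with the right $\rho$-powers. The paper's route via $[A,R_{(\cdot)}]$ sidesteps all of that bookkeeping, which is the main reason its proof is shorter. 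Two cosmetic points: your forward direction can skip the degree-term computation entirely, since $A(yp)=A(y)p$ for all $y$ literally says $\nabla_A(p)=[A,R_p]=0$; and you write $A(\mathds{1})=\sum_j w_jv_j^{\ast}v_j$ where the Kraus form $A(x)=\sum_j w_jv_jxv_j^{\ast}$ gives $A(\mathds{1})=\sum_j w_jv_jv_j^{\ast}$ --- harmless here since $[v_j,p]=0$ implies $[v_j^{\ast},p]=0$, but worth fixing.
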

\begin{proof}
The quantum graph Laplacian can be constructed using the gradient, $\Delta= \nabla_{A}^{\ast} \nabla_{A}$ (see \cite[Remark 2.7]{Matsuda2}). It is clear that the kernel of the Laplacian is equal to the kernel of the gradient. By \cite[Proposition 2.3]{Matsuda2} we can identify the gradient with the commutator $[A,R_{(\cdot)}]$. It means that $y \in ker(\nabla_{A})$ iff $A(xy) = A(x)y$ for all $x\in \M$. %{\color{pink} Hence $\mathcal{G}$ is disconnected if and only if there is a non-trivial projection in $ker(\nabla_{A})$.} 
If $A$ commutes with $R_y$ then it also commutes with its adjoint $R_{\sigma_{-i}(y^{\ast})}$. From the fact that $A$ commutes with the modular group we conclude that it commutes with $R_{y^{\ast}}$, so the kernel is $\ast$-closed. It is also clearly an algebra (as the kernel of a derivation), so it is a $\mathrm{C}^{\ast}$-algebra. The unit belongs to the kernel, so $0$ is not a simple eigenvalue if and only if the kernel is a non-trivial $\mathrm{C}^{\ast}$-algebra. Since we are in a finite dimensional setting,  the $\mathrm{C}^{\ast}$-algebra $ker(\nabla_{A})$ is non-trivial if and only if it contains a non-trivial projection, which is equivalent to $\mathcal{G}$ being disconnected. 
\end{proof}
We will now show how to adjust the definition of graph homomorphisms (see \cite[Definition 3.1, Definition 3.2]{Matsuda2}) to the non-tracial case so that it still captures connectedness.
\begin{defn}\label{nontracial graph hom}
Let $\mathcal{G}_1:= (\M_1, \psi_1, A_1)$ and $\mathcal{G}_2:=(\M_2, \psi_2, A_2)$ be quantum graphs. A \emph{graph homomorphism} from $\mathcal{G}_1$ to $\mathcal{G}_2$ is a map $f: \M_2 \to \M_1$, whose \emph{KMS implementation} $\widetilde{f}$ is a $\ast$-homomorphism such that $A_2 \bullet (f^{\ast} A_1 f) = (f^{\ast} A_1 f)$, where the adjoint is computed with respect to the KMS inner product.

\end{defn}
Matsuda (\cite[Definition 3.2]{Matsuda2}) calls a quantum graph $\mathcal{G}:= (\M, \psi, A)$ \emph{disconnected} if it admits a surjective homomorphism onto the trivial graph $T_2$, meaning that the map $f: \mathbb{C}^2 \to \M$ is injective. We will show that it is equivalent to our definition.
\begin{prop}\label{prop:homomorphismdisconnected}
Let $\mathcal{G} = (\M, \psi, A)$ be an undirected quantum graph. Then it is disconnected if and only if it admits a surjective graph homomorphism onto $T_2$.
\end{prop}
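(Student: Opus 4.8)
The plan is to transport the statement to the tracial picture via the KMS implementation $\widetilde{A}$ and then read off (dis)connectivity from Lemma~\ref{Lem:irred} and Proposition~\ref{prop:Laplaciansimpleeigenvalue}. I would begin with the harmless structural remark that on $\C^2 \cong M_1 \oplus M_1$ the only $1$-form is $\mu(x) = x_1 + x_2$, so the modular group of $T_2 = (\C^2, \mu, \mathrm{id})$ is trivial, the KMS and GNS inner products on $\C^2$ coincide, $\{e_1, e_2\}$ is an orthonormal basis, and the quantum Schur product $\bullet$ on $\C^2$ is the ordinary entrywise Schur product $\odot$. Under these identifications an injective unital $\ast$-homomorphism $\widetilde{f}\colon \C^2 \to \M$ is precisely a pair of non-trivial projections $q_1, q_2 \in \M$ with $q_1 + q_2 = \mathds{1}$ and $\widetilde{f}(e_i) = q_i$, and the corresponding $f$ of Definition~\ref{nontracial graph hom} is recovered by $f(x) = \rho^{-\frac14}\widetilde{f}(x)\rho^{-\frac14}$.

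The one genuine computation is to unwind the defining relation $\mathrm{id} \bullet (f^{\ast}Af) = f^{\ast}Af$, where $f^{\ast}$ is the KMS-adjoint. Since $\mathrm{id}$ corresponds to the $2 \times 2$ identity matrix, this relation says exactly that the matrix of $f^{\ast}Af\colon \C^2 \to \C^2$ is diagonal, i.e.
\[
\langle f(e_1), A f(e_2)\rangle_{\mathrm{KMS}} = \langle f(e_2), A f(e_1)\rangle_{\mathrm{KMS}} = 0 .
\]
Substituting $f(e_i) = \rho^{-\frac14}q_i\rho^{-\frac14}$ and using both $\langle a,b\rangle_{\mathrm{KMS}} = \tau\big((\rho^{\frac14}a\rho^{\frac14})^{\ast}\,\rho^{\frac14}b\rho^{\frac14}\big)$ and the defining relation $\widetilde{A}(y) = \rho^{\frac14}A(\rho^{-\frac14}y\rho^{-\frac14})\rho^{\frac14}$, these two equations collapse to
\[
\tau\big(q_1\,\widetilde{A}(q_2)\big) = \tau\big(q_2\,\widetilde{A}(q_1)\big) = 0 .
\]
This is precisely Matsuda's tracial disconnectedness condition, now for the projections $q_1, q_2$ and the map $\widetilde{A}$ on the tracial space $(\M, \tau)$ (alternatively, once one notes that $(\M,\tau,\widetilde A)$ is a tracial undirected quantum graph, one may invoke Proposition~\ref{prop:homomorphismconnected} here directly).

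It then remains to link this trace condition to connectivity. Since $A$ is real, $\widetilde{A} = \iota' \circ A \circ \kappa$ with $\kappa(y) = \rho^{-\frac14}y\rho^{-\frac14}$ and $\iota'(z) = \rho^{\frac14}z\rho^{\frac14}$ is completely positive, and since $\tau$ is a faithful trace the equation $\tau(q_2\widetilde{A}(q_1)) = 0$ forces $(\mathds{1} - q_1)\widetilde{A}(q_1)(\mathds{1} - q_1) = 0$, hence $\widetilde{A}(q_1)(\mathds{1} - q_1) = 0$ by the faithfulness argument opening the proof of Lemma~\ref{Lem:irred}; thus $\widetilde{A}$, and therefore $A$, is reducible, i.e.\ $\mathcal{G}$ is disconnected by Proposition~\ref{prop:Laplaciansimpleeigenvalue}. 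Conversely, if $\mathcal{G}$ is disconnected then $A$, hence $\widetilde{A}$, is reducible, so Lemma~\ref{Lem:irred} supplies a non-trivial projection $q_1$ with $\widetilde{A}(q_1)(\mathds{1} - q_1) = 0$; taking $q_2 = \mathds{1} - q_1$, using that $\widetilde{A}$ is $\tau$-self-adjoint (because $A$ is KMS-symmetric) and traciality, both displayed trace identities hold, and by the computation of the previous paragraph $f(x) := \rho^{-\frac14}\widetilde{f}(x)\rho^{-\frac14}$ with $\widetilde{f}(e_i) = q_i$ is the required surjective graph homomorphism onto $T_2$.

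I expect no deep obstacle; the real work is the bookkeeping in the middle step — tracking which inner product each adjoint is taken with respect to, verifying that the KMS implementation genuinely intertwines the two descriptions (so that $\widetilde{A}$ is completely positive and $\tau$-self-adjoint and $\widetilde{f}$ is a $\ast$-homomorphism), and observing that the distinguished functional on $T_2$ is forced to be the one making $\bullet$ literally the entrywise product. Everything else is a direct appeal to results already established in the paper.
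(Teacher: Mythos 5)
Your argument is correct and follows essentially the same route as the paper: express $f$ through its KMS implementation $\widetilde{f}(e_i)=q_i$, reduce the homomorphism condition onto $T_2$ to the vanishing of $\tau\big(q_1\widetilde{A}(q_2)\big)$ and $\tau\big(q_2\widetilde{A}(q_1)\big)$, and use complete positivity of $\widetilde{A}$ plus faithfulness of $\tau$ together with Lemma \ref{Lem:irred} to identify this with reducibility of $\widetilde{A}$, hence of $A$. The only quibble is your parenthetical suggestion to apply Proposition \ref{prop:homomorphismconnected} to the triple $(\M,\tau,\widetilde{A})$: that triple need not be a quantum graph in the paper's sense ($\tau$ need not be a $1$-form and $\widetilde{A}$ need not be a Schur idempotent for it), but this aside is not load-bearing since your main computation already gives both directions.
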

\begin{proof}
In the tracial case the KMS implementation does not change anything. Let us see what happens in the non-tracial case. The KMS implementation of $f: \mathbb{C}^2 \to \M$ is equal to $\widetilde{f}(x):= \rho^{\frac{1}{4}}f(x) \rho^{\frac{1}{4}}$. An injective $\ast$-homomorphism from $\mathbb{C}^2$ is just a choice of two non-trivial projections $p_1$ and $p_2$ in $\M$ such that $p_1 + p_2 = \mathds{1}$. It means that $\widetilde{f}(e_i) = p_i$, i.e. $f(e_i) = \rho^{-\frac{1}{4}}p_i \rho^{-\frac{1}{4}}$. The KMS adjoint is equal to $f^{\ast}(x) = e_1 \tau(p_1 \rho^{\frac{1}{4}}x \rho^{\frac{1}{4}}) + e_2 \tau(p_2 \rho^{\frac{1}{4}}x \rho^{\frac{1}{4}})$. This computation yields $f^{\ast} A f(e_i) = e_1 \tau(p_1 \widetilde{A}(p_i)) + e_2 \tau(p_2 \widetilde{A}(p_i))$. After taking the Schur product with the identity (which is the adjacency matrix of the trivial graph $T_2$) we obtain $id \bullet f^{\ast} A f(e_i) = e_i \tau( p_i \widetilde{A}(p_i))$, so the graph homomorphism condition is that $\tau(p_1 \widetilde{A}(p_2)) = \tau(p_2 \widetilde{A}(p_1)) = 0$. But these are traces of products of positive operators, so $ \widetilde{A}(p_2) p_1 =  \widetilde{A}(p_1) p_2 = 0$. As $p_2 = \mathds{1} - p_1$, it follows from Lemma \ref{Lem:irred} that $\widetilde{A}$ is reducible, i.e. $\mathcal{G}$ is disconnected.
\end{proof}
\begin{rem}
It seems that even if we assume that $A$ is symmetric with respect to the GNS inner product, then we still have to adjust the definition of the graph homomorphism. The crucial point is that we concluded from the vanishing of the trace that the product of positive operators vanishes, which does not work for the GNS inner product. 
\end{rem}

We would also like to state the analogue of Proposition \ref{Prop:irredbimod} in the non-tracial case.
\begin{prop}\label{Prop:irredbimod nT}
Let $\mathcal{G}:= (\M,\psi, A)$ be a quantum graph and let $S \subset B(H)$ be the corresponding $\M'$-bimodule. Then $A$ is irreducible if and only if the algebra generated by $S$ is equal to $B(H)$.
\end{prop}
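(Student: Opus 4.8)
The plan is to reduce Proposition~\ref{Prop:irredbimod nT} to its tracial counterpart, Proposition~\ref{Prop:irredbimod}, by passing to the KMS implementation $\widetilde{A}$. As a first step I would fix the canonical representation $\M\simeq\bigoplus_{a=1}^{d}M_{n_a}$ acting on $H:=\bigoplus_{a=1}^{d}\mathbb{C}^{n_a}$; this is legitimate because, by Lemma~\ref{lem:Burnside}, the property that the algebra generated by $S$ equals $B(H)$ does not depend on the representation, and combining Lemmas~\ref{lem:Burnside} and~\ref{lem:projectioninalgebra} it is equivalent, in this representation, to the \emph{non}-existence of a non-trivial projection $p\in\M$ with $(\mathds{1}-p)Sp=0$. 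Thus it suffices to prove that such a $p$ exists if and only if $A$ is reducible.

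Next I would pass to the KMS implementation. Recall from the proof of Proposition~\ref{prop:Laplaciansimpleeigenvalue} that $A$ is irreducible if and only if $\widetilde{A}(x)=\rho^{\frac14}A(\rho^{-\frac14}x\rho^{-\frac14})\rho^{\frac14}$ is irreducible, so I may work with $\widetilde{A}$ throughout. The crucial observation is that, writing $\widetilde{A}=\bigoplus_{a,b}\widetilde{A}_{ab}$ with $\widetilde{A}_{ab}\colon M_{n_a}\to M_{n_b}$ and substituting the block formula of Proposition~\ref{prop:qadjacencyfrombimodule}, all the modular factors cancel:
\[
\widetilde{A}_{ab}(x)=\rho_b^{\frac14}\,A_{ab}\!\left(\rho_a^{-\frac14}x\rho_a^{-\frac14}\right)\rho_b^{\frac14}=\sum_i S_{ab}^{i}\,x\,(S_{ab}^{i})^{\ast},
\]
where $(S_{ab}^{i})_i$ is the basis of $S_{ab}$ from Proposition~\ref{prop:qadjacencyfrombimodule} (only the fact that it spans $S_{ab}$ is relevant). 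So $\widetilde{A}$ is a completely positive map presented in Kraus form whose Kraus operators span the blocks $S_{ab}$ --- precisely the situation used in the proof of Proposition~\ref{Prop:irredbimod}.

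Finally I would transcribe that tracial argument with $\widetilde{A}$ in place of $A$. If $(\mathds{1}-p)Sp=0$ with $p=\bigoplus_a p_a$, then $(\mathds{1}_{n_b}-p_b)S_{ab}p_a=0$, hence $S_{ab}^{i}p_a=p_b S_{ab}^{i}p_a$ for every $i$, so $\widetilde{A}_{ab}(p_a x p_a)=p_b\widetilde{A}_{ab}(p_a x p_a)p_b$ and therefore $\widetilde{A}(pxp)=p\widetilde{A}(pxp)p$, i.e.\ $\widetilde{A}$ is reducible by Lemma~\ref{Lem:irred}. Conversely, if $\widetilde{A}(pxp)=p\widetilde{A}(pxp)p$ for a non-trivial projection $p=\bigoplus_a p_a\in\M$, then comparing the two Kraus decompositions of $x\mapsto\widetilde{A}_{ab}(p_axp_a)$ --- namely $\sum_i S_{ab}^{i}p_a x p_a (S_{ab}^{i})^{\ast}$ and $\sum_i p_b S_{ab}^{i}p_a x p_a (S_{ab}^{i})^{\ast}p_b$ --- and using that the span of Kraus operators is independent of the decomposition gives $S_{ab}p_a=p_b S_{ab}p_a$; reassembling the blocks yields $(\mathds{1}-p)Sp=0$.

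The only step that is more than bookkeeping is the transfer of (ir)reducibility between $A$ and $\widetilde{A}$: because $\iota(x)=\rho^{\frac14}x\rho^{\frac14}$ does not map projections to projections, the unitary conjugacy of $A$ and $\widetilde{A}$ alone does not suffice, and one has to argue through the equivalent characterizations of reducibility in Lemma~\ref{Lem:irred} together with Remark~\ref{Rem:KMSproj}, exactly as was done in the proof of Proposition~\ref{prop:Laplaciansimpleeigenvalue}. Everything else is a direct copy of the tracial proof once the Kraus formula for $\widetilde{A}_{ab}$ is in hand.
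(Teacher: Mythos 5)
Your proposal is correct and follows essentially the same route as the paper: pass to the canonical representation, observe that the modular factors in the block formula of Proposition~\ref{prop:qadjacencyfrombimodule} cancel so that the Kraus operators of the KMS implementation $\widetilde{A}$ span $S$, rerun the tracial argument of Proposition~\ref{Prop:irredbimod} for $\widetilde{A}$, and transfer irreducibility between $A$ and $\widetilde{A}$. The only difference is that you write out the cancellation and the block-by-block argument explicitly, whereas the paper compresses these steps into a reference to the tracial proof.
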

\begin{proof}
For $\M \simeq \bigoplus_{a=1}^{d} M_{n_{a}}$ we work with the representation on $H:= \bigoplus_{a=1}^{d} \mathbb{C}^{n_{a}}$. From Proposition \ref{prop:qadjacencyfrombimodule} we get a formula expressing the quantum adjacency matrix in terms of the bimodule $S$. It follows from it that the Kraus operators of the KMS implementation $\widetilde{A}$ can be taken to form an orthonormal basis of $S$. Therefore the proof of \ref{Prop:irredbimod} applies to show that the algebra generated by $S$ is equal to $B(H)$ if and only if $\widetilde{A}$ is irreducible. But $\widetilde{A}$ is irreducible if and only if $A$ is irreducible, so the proof is complete.
\end{proof}

\subsection{Bipartite quantum graphs}
Here we will generalize \cite[Theorem 3.8]{Matsuda2} to non-regular and non-tracial quantum graphs. 
\begin{defn}[{\cite[Definition 3.2]{Matsuda2}}]
Let $K_2$ be the irreflexive complete graph on two vertices, i.e. its adjacency matrix is equal to $\left[\begin{array}{cc} 0 & 1 \\ 1 & 0 \end{array}\right]$. We say that a quantum graph $\G$ is \emph{bipartite} if and only it admits a surjective graph homomorphism onto $K_2$. 
\end{defn}
\begin{rem} A very similar computation to the one in the proof of Proposition \ref{prop:homomorphismdisconnected} shows that a quantum graph is bipartite if and only if there exist two non-trivial projections $p_1$ and $p_2$ summing to $\mathds{1}$ such that $p_1 A(p_1) = 0 = p_2 A(p_2)$.
\end{rem}

Before we state and prove the results, we need an analogue of Lemma \ref{Lem:irred}; the proofs are completely analogous, so we provide fewer details here.
\begin{lemma}\label{Lem:bipartite}
Let $\Phi: \M \to \M$ be a completely positive map. Fix a non-trivial projection $p \in \M$. The following are equivalent:
\begin{enumerate}
\item\label{bipone} $\Phi(p) \leqslant C(\mathds{1} - p)$ and $\Phi(\mathds{1} - p)\leqslant Cp$ for some $C>0$;
\item\label{biptwo} $\Phi(p)p=0$ and $\Phi(\mathds{1} - p) (\mathds{1} - p)=0$;
\item\label{bipthree} $\Phi(xp) = \Phi(x)(\mathds{1}-p)$ for all $x\in \M$.
\end{enumerate}
\end{lemma}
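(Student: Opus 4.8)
The plan is to mimic the proof of Lemma \ref{Lem:irred} closely, replacing the projection $p$ by $q:=\mathds{1}-p$ wherever one factor needs to be ``orthogonal'' to the other. First I would prove \eqref{bipone}$\implies$\eqref{biptwo}: if $\Phi(p)\leqslant Cq$, then $p\Phi(p)p = 0$ since $pqp=0$, and since $\Phi(p)\geqslant 0$ this forces $\sqrt{\Phi(p)}\,p = 0$, hence $\Phi(p)p=0$; the same argument applied to $\Phi(q)\leqslant Cp$ gives $\Phi(q)q=0$. Next, \eqref{biptwo}$\implies$\eqref{bipthree}: by the Kadison--Schwarz inequality $(\Phi(xp))^{\ast}\Phi(xp)\leqslant \|\Phi(\mathds{1})\|\,\Phi(px^{\ast}xp)\leqslant \|\Phi(\mathds{1})\|\|x\|^{2}\Phi(p)$, and from $\Phi(p)p=0$ we get $p(\Phi(xp))^{\ast}\Phi(xp)p=0$, hence $\Phi(xp)p=0$, i.e. $\Phi(xp)=\Phi(xp)q$. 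It remains to replace the first factor $\Phi(xp)$ by $\Phi(x)$: writing $x=xp+xq$ we have $\Phi(x)q = \Phi(xp)q + \Phi(xq)q$, and since $\Phi$ is $\ast$-preserving the identity $\Phi(q)q=0$ yields, by the same Kadison--Schwarz argument applied to $xq$, that $\Phi(xq)q=0$; therefore $\Phi(x)q=\Phi(xp)q=\Phi(xp)$, which is \eqref{bipthree}.

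For the remaining implication \eqref{bipthree}$\implies$\eqref{bipone} I would argue as follows. From $\Phi(xp)=\Phi(x)q$, setting $x=p$ gives $\Phi(p)=\Phi(p)q$, so $\Phi(p)=q\Phi(p)q$ (using that $\Phi(p)$ is self-adjoint, as $\Phi$ is $\ast$-preserving and $p$ is self-adjoint): indeed $\Phi(p)=\Phi(p)q$ and taking adjoints $\Phi(p)=q\Phi(p)$, hence $\Phi(p)=q\Phi(p)q\in q\M q$, so $\Phi(p)\leqslant \|\Phi(p)\|\,q\leqslant C q$ for suitable $C$. For the other half, apply \eqref{bipthree} with $x$ replaced by $xp$: since $(xp)p = xp$ we would get $\Phi(xp)=\Phi(xp)q$, which we already have, so this does not immediately give the bound on $\Phi(q)$; instead I would first derive from \eqref{bipthree} the ``symmetric'' statement $\Phi(xq)=\Phi(x)p$. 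To see this, replace $x$ by $xq$ in \eqref{bipthree}: $\Phi(xqp)=\Phi(xq)q$, but $qp=0$ so $\Phi(xq)q=0$, i.e. $\Phi(xq)=\Phi(xq)p$. Then, as before, $\Phi(x)p=\Phi(xp)p+\Phi(xq)p$, and $\Phi(xp)p=0$ follows from Kadison--Schwarz together with $\Phi(p)p=0$ (which we derived above from \eqref{biptwo}, itself a consequence of the chain so far). Hence $\Phi(xq)=\Phi(x)p$; taking $x=q$ gives $\Phi(q)=\Phi(q)p=p\Phi(q)p\in p\M p$, so $\Phi(q)\leqslant Cp$, completing \eqref{bipone}.

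The main subtlety — and the one place where I would be careful — is the bookkeeping of which implications are available at each stage: the cleanest route is to prove the cycle \eqref{bipone}$\implies$\eqref{biptwo}$\implies$\eqref{bipthree}$\implies$\eqref{biptwo}$'\,$(the symmetric version)$\,\implies$\eqref{bipone}, so that by the time I need $\Phi(p)p=0$ and $\Phi(q)q=0$ in the last step, they have already been established. Concretely, I would organize it as: \eqref{bipone}$\Rightarrow$\eqref{biptwo} (positivity trick), \eqref{biptwo}$\Rightarrow$\eqref{bipthree} (Kadison--Schwarz, as in Lemma \ref{Lem:irred}), and \eqref{bipthree}$\Rightarrow$\eqref{biptwo} (substitute $x\mapsto p$ and $x\mapsto q$, using $p^2=p$, $q^2=q$, $pq=0$ and $\ast$-preservation), after which \eqref{biptwo}$\Rightarrow$\eqref{bipone} is immediate since $\Phi(p)p=0$ with $\Phi(p)\geqslant 0$ forces $\Phi(p)=q\Phi(p)q\leqslant \|\Phi(p)\|q$ and symmetrically for $\Phi(q)$. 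No genuinely new idea beyond Lemma \ref{Lem:irred} is needed; the only hazard is sign/complement errors, so I would state explicitly at the outset that $q:=\mathds{1}-p$ and that $\Phi$ $\ast$-preserving makes $\Phi(p)$, $\Phi(q)$ self-adjoint.
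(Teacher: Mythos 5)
Your proposal is correct and follows essentially the same route as the paper's proof: the positivity trick for \eqref{bipone}$\Rightarrow$\eqref{biptwo}, Kadison--Schwarz plus the decomposition $x = xp + x(\mathds{1}-p)$ for \eqref{biptwo}$\Rightarrow$\eqref{bipthree}, and substitution of specific elements into \eqref{bipthree} to close the cycle (the paper uses $x=\mathds{1}$ and $x=\mathds{1}-p$ where you use $x=p$ and $x=q$, a trivial variation).
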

\begin{proof}
It follows from \eqref{bipone} that $p\Phi(p)p =0$, hence $\Phi(p)p=0$ and $\Phi(\mathds{1} - p) (\mathds{1}-p) = 0$ can be proved in the same way. Let us now assume that \eqref{biptwo} holds. The Kadison-Schwarz inequality gives $(\Phi(xp))^{\ast} \Phi(xp) \leqslant C \Phi(p)$, hence $\Phi(xp)p=0$. We also get $\Phi(x(\mathds{1} - p))(\mathds{1} - p)=0$. It follows that 
\[
\Phi(xp) = \Phi(xp)(\mathds{1} - p) = \Phi(x)(\mathds{1}-p) - \Phi(x(\mathds{1}-p)) (\mathds{1}-p) = \Phi(x)(\mathds{1} - p).
\]
Now we assume \eqref{bipthree}. Equality $\Phi(xp) = \Phi(x)(\mathds{1}-p)$ for $x=\mathds{1}$ implies that $\Phi(p) \leqslant C(\mathds{1} - p)$ and for $x = \mathds{1}-p$ it shows that $\Phi(\mathds{1} - p)\leqslant Cp$.
\end{proof}
\begin{thm}\label{Thm:bipartite}
Let $\mathcal{G}:= (\M, \psi, A)$ be an undirected, connected quantum graph. Let $\lambda$ be the largest eigenvalue of $A$. The following are equivalent:
\begin{enumerate}
\item\label{bipar1} $-\lambda \in \sigma(A)$;
\item\label{bipar2} there is a non-trivial projection $p\in \M$ such that $\widetilde{A}(xp) =\widetilde{A}(x)(\mathds{1}-p)$ for all $x\in \M$, i.e. $\mathcal{G}$ is bipartite.
\end{enumerate}
\end{thm}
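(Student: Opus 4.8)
The plan is to prove the two implications separately, working throughout with the KMS implementation $\widetilde{A}$: it is completely positive (by reality of $A$), self-adjoint for the trace $\tau$ (since $\mathcal{G}$ is undirected), irreducible (since $\mathcal{G}$ is connected, by Proposition~\ref{prop:Laplaciansimpleeigenvalue} and its proof), and unitarily conjugate to $A$, so $\sigma(\widetilde{A})=\sigma(A)$. By the non-commutative Perron--Frobenius theorem, $\lambda$ is the spectral radius of $\widetilde{A}$, it is a simple eigenvalue with a strictly positive eigenvector $h$, and $\|\widetilde{A}\|=\lambda$; thus condition \ref{bipar1} says precisely that $-\lambda$ is the least eigenvalue of $\widetilde{A}$. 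For the implication \ref{bipar2}$\Rightarrow$\ref{bipar1}: given the projection $p$, put $u:=2p-\mathds{1}$, a self-adjoint unitary. From $\widetilde{A}(xp)=\widetilde{A}(x)(\mathds{1}-p)$ one gets $\widetilde{A}(x(\mathds{1}-p))=\widetilde{A}(x)-\widetilde{A}(xp)=\widetilde{A}(x)p$, hence $\widetilde{A}(xu)=\widetilde{A}(xp)-\widetilde{A}(x(\mathds{1}-p))=-\widetilde{A}(x)u$ for all $x\in\M$. Taking $x=h$ gives $\widetilde{A}(uh)=-\lambda\,uh$, and $uh$ is invertible, so $-\lambda\in\sigma(\widetilde{A})=\sigma(A)$.

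For \ref{bipar1}$\Rightarrow$\ref{bipar2}, choose $k\neq 0$ with $\widetilde{A}(k)=-\lambda k$; since $\widetilde{A}$ is $*$-preserving and $\lambda$ is real we may take $k=k^{*}$, and after rescaling assume $\|h^{-1/2}kh^{-1/2}\|=1$. The crux is a Perron--Frobenius statement for the bottom eigenvalue: $k=uh$ for a self-adjoint unitary $u\in\M$ commuting with $h$ (equivalently $|k|=h$, so that the least eigenvector is a signing of the Perron--Frobenius eigenvector). I would obtain this through a Doob-type transform: $\Phi(y):=\lambda^{-1}h^{-1/2}\widetilde{A}(h^{1/2}yh^{1/2})h^{-1/2}$ is a \emph{unital} completely positive map; it is irreducible (its Kraus operators span $h^{-1/2}Sh^{1/2}$, which generates the same algebra as the bimodule $S$ of $\widetilde{A}$), it admits the faithful invariant state $\omega(y)=\tau(h^{2}y)/\tau(h^{2})$, and it is self-adjoint for the KMS inner product $\langle a,b\rangle_\omega=\tau(ha^{*}hb)$ of $\omega$ (which is exactly the KMS-symmetry of $\widetilde{A}$). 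Now $\ell:=h^{-1/2}kh^{-1/2}$ is self-adjoint with $\Phi(\ell)=-\ell$; the Kadison--Schwarz inequality gives $\Phi(\ell^{2})\geqslant\Phi(\ell)^{*}\Phi(\ell)=\ell^{2}$, and applying the faithful invariant state $\omega$ forces $\Phi(\ell^{2})=\ell^{2}$. Since $\Phi$ is irreducible and unital its fixed-point set is $\C\mathds{1}$, so $\ell^{2}=\mathds{1}$ and $\ell$ is a self-adjoint unitary. To upgrade this to $[\ell,h]=0$, I would show that $h^{\pm 1/2}\ell h^{\mp 1/2}$ is again a $(-\lambda)$-eigenvector-type element for $\Phi$ and use that the peripheral eigenspaces of the irreducible map $\Phi$ are one-dimensional, together with positivity of $h$ (so that $h\ell h^{-1}=c\ell$ with $|c|=1$ and $c>0$ forces $c=1$); this is the step that uses the quantum-graph structure of $\widetilde{A}$.

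Granting $k=uh$ with $u$ a self-adjoint unitary commuting with $h$, set $p:=(\mathds{1}+u)/2$, a nontrivial projection in $\M$ (nontrivial because $u=\pm\mathds{1}$ would make $k$ semidefinite, impossible for a $(-\lambda)$-eigenvector of a positive map with $\lambda>0$). From $\Phi(\ell)=-\ell$ we get $\Phi(p)=\mathds{1}-p$, hence $\Phi(p)p=\Phi(\mathds{1}-p)(\mathds{1}-p)=0$; by Lemma~\ref{Lem:bipartite} applied to $\Phi$, this yields $\Phi(yp)=\Phi(y)(\mathds{1}-p)$ for all $y\in\M$. Unwinding the Doob transform, and using $[p,h]=0$ to move $p$ past the factors $h^{\pm 1/2}$, this becomes $\widetilde{A}(zp)=\widetilde{A}(z)(\mathds{1}-p)$ for all $z\in\M$, which is condition \ref{bipar2} (equivalently, by Lemma~\ref{Lem:bipartite} and the computation identifying bipartiteness with a surjective homomorphism onto $K_2$, $\mathcal{G}$ is bipartite).

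The step I expect to be the genuine obstacle is the commutation $[\ell,h]=0$, equivalently $|k|=ch$ for a scalar $c>0$: without it the Doob transform produces only a non-self-adjoint idempotent $e=h^{-1/2}ph^{1/2}$ with $\widetilde{A}(ze)=\widetilde{A}(z)(\mathds{1}-e)$, which is not literally condition \ref{bipar2}. I believe this is precisely where one must use that $\widetilde{A}$ is a quantum adjacency matrix (a Schur idempotent) rather than merely an irreducible self-adjoint completely positive map, since for a general such map the least eigenvector need not be a signing of the Perron--Frobenius eigenvector. The remaining ingredients (reducing to $\widetilde{A}$, the Doob transform, Kadison--Schwarz, and Lemma~\ref{Lem:bipartite}) are routine once this structural fact is in hand.
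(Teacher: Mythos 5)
Your direction \eqref{bipar2}$\Rightarrow$\eqref{bipar1} is fine and matches the paper's (the paper phrases it as $R_{2p-\mathds{1}}\widetilde{A}R_{2p-\mathds{1}}=-\widetilde{A}$, forcing the spectrum to be symmetric). In the converse direction your Doob-transform argument is sound up to the point where you obtain that $\ell=h^{-1/2}kh^{-1/2}$ is a self-adjoint unitary, but the step you yourself flag --- the commutation $[\ell,h]=0$ --- is a genuine gap, and the sketch you offer for it does not work as stated: $h^{\pm 1/2}\ell h^{\mp 1/2}$ is not an eigenvector of $\Phi$ in any evident sense, because $\widetilde{A}$ does not intertwine left/right multiplication by $h$. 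Without this commutation you only get the non-self-adjoint idempotent identity $\widetilde{A}(z\,h^{-1/2}p'h^{1/2})=\widetilde{A}(z)(\mathds{1}-h^{1/2}p'h^{-1/2})$, which is not condition \eqref{bipar2}. Your diagnosis of \emph{why} the step should be hard is also off: the needed fact (the $-\lambda$-eigenvector is a signing of the Perron--Frobenius eigenvector) holds for any irreducible, trace-self-adjoint positive map and does not use the Schur-idempotent property at all.

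The paper closes exactly this gap by a short trace argument that bypasses the Doob transform: writing $k=x_1-x_2$ with $x_1,x_2\geqslant 0$, $x_1x_2=0$, and $y=x_1+x_2$, positivity of $\widetilde{A}$ gives $\lambda=\abs{\tau(k\widetilde{A}(k))}\leqslant\tau(y\widetilde{A}(y))\leqslant\lambda$, so $y$ is the Perron--Frobenius eigenvector $h$; then $x_1=hp$, $x_2=h(\mathds{1}-p)$ for $p$ the support of $x_1$, the commutation with $h$ is automatic, and $\widetilde{A}(x_1)=\lambda x_2$ gives $\widetilde{A}(p)\leqslant C(\mathds{1}-p)$, whence Lemma~\ref{Lem:bipartite} applies. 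If you want to stay within your framework, the gap can be repaired: equality in Kadison--Schwarz puts $\ell$ in the multiplicative domain of $\Phi$, so $\Phi(\ell y\ell)=\ell\Phi(y)\ell$; then $y\mapsto\omega(\ell y\ell)$ is another faithful invariant state of the irreducible map $\Phi$, and uniqueness of the invariant state forces $\ell h^{2}\ell=h^{2}$, hence $[\ell,h]=0$. (This bonus: the multiplicative domain also gives $\Phi(yp')=\Phi(y)(\mathds{1}-p')$ for all $y$ directly, without re-invoking Lemma~\ref{Lem:bipartite}.) As written, however, the proof is incomplete at its crux.
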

\begin{proof}
Suppose that \eqref{bipar2} holds. Let $u:= R_{2p-\mathds{1}}$; $u$ is a self-adjoint unitary. We will show that $u\widetilde{A} u = -\widetilde{A}$. By \eqref{bipar2} we have $\widetilde{A} R_{p} = R_{\mathds{1}-p} \widetilde{A}$ and by self-adjointness of $\widetilde{A}$ we have $\widetilde{A} R_{\mathds{1}-p} = R_{p} \widetilde{A}$ as well. It follows that $\widetilde{A} R_{2p-1} = - R_{2p-1} \widetilde{A}$, that is $u \widetilde{A} u = - \widetilde{A}$. Since $\widetilde{A}$ is similar to $-\widetilde{A}$, its spectrum has to be symmetric around zero, hence $-\lambda \in \sigma(\widetilde{A}) = \sigma(A)$. 

Assume now that $-\lambda \in \sigma(A) = \sigma(\widetilde{A})$. As $\widetilde{A}$ is $\ast$-preserving, we can find a %normalized
self-adjoint element $x\in \M$ such that $\widetilde{A}(x) = -\lambda x$ and $\|x\|_2=1$. Let $x= x_1 - x_2$ be the decomposition of $x$ into a difference of two positive elements such that $x_1 x_2=0$. We will show that $y:= x_1 + x_2$ is the unique eigenvector of $\widetilde{A}$ with eigenvalue $\lambda$ (unique by irreducibility/connectedness). We have 
\begin{align*}
\lambda &= | \tau \big(x \widetilde{A} (x)\big)| = |\tau\big( x_1 \widetilde{A}(x_1) + x_2 \widetilde{A}(x_2) - x_1 \widetilde{A}(x_2) - x_2 \widetilde{A}(x_1)\big)| \\
&\leqslant \tau\big( x_1 \widetilde{A}(x_1) + x_2 \widetilde{A}(x_2) + x_1 \widetilde{A}(x_2) + x_2 \widetilde{A}(x_1)\big) = \tau\big(y \widetilde{A}(y)\big) \leqslant \lambda,
\end{align*}
where the last inequality follows from the fact that $\|y\|_2 = \|x\|_2 = 1$, because $x_1 x_2 = 0$, and $\|\widetilde{A}\| = \lambda$. It means that we have an equality in the Cauchy-Schwarz inequality, hence $y$ is an eigenvector of $\widetilde{A}$ with eigenvalue $\lambda$, therefore $y$ has to be the Perron-Frobenius eigenvector, which is fully supported. It follows from equalities $\widetilde{A}(x_1 - x_2) = \lambda(x_2 - x_1)$ and $\widetilde{A}(x_1 + x_2) = \lambda(x_1 + x_2)$ that $\widetilde{A}(x_1) =\lambda x_2$ and $\widetilde{A}(x_2) = \lambda x_1$. We define $p$ to be the support of $x_1$, then $\mathds{1} - p$ is the support of $x_2$. It follows from  $\widetilde{A}(x_1) =\lambda x_2$ that $A(p) \leqslant C(\mathds{1} -p)$, so we conclude using Lemma \ref{Lem:bipartite}. 
\end{proof}
For GNS symmetric quantum adjacency matrices we get a result not involving the KMS implementations.
\begin{prop}
Let $A$ be a GNS symmetric, irreducible, completely positive map on $\M$ and let $\lambda$ be its largest eigenvalue. The following are equivalent:
\begin{enumerate}
\item $-\lambda \in \sigma(A)$;
\item there exists a nontrivial projection $p \in \M$ such that $A(xp) = A(x)(\mathds{1}-p)$ for all $x\in \M$.
\end{enumerate}
\end{prop}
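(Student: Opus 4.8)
The plan is to reduce to the KMS-symmetric case of Theorem \ref{Thm:bipartite}, using GNS symmetry only at the very end to remove a modular twist. First I would recall that a GNS symmetric completely positive map automatically commutes with the modular group $\sigma_z$ (by \cite[Proposition 2.2]{Wirth}, as already invoked before Proposition \ref{prop:Laplacianconnected}); in particular $A$ is KMS symmetric. Hence $(\M,\psi,A)$ is an undirected, connected (since $A$ is irreducible) quantum graph whose largest eigenvalue is still $\lambda$, so Theorem \ref{Thm:bipartite} (whose proof uses only KMS symmetry, complete positivity, and irreducibility of $A$) applies and gives: $-\lambda\in\sigma(A)$ if and only if there is a non-trivial projection $p\in\M$ with $\widetilde{A}(xp)=\widetilde{A}(x)(\mathds{1}-p)$ for all $x\in\M$, where $\widetilde{A}$ denotes the KMS implementation of $A$.

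It then remains to show that, for a fixed non-trivial projection $p$, the condition $\widetilde{A}(xp)=\widetilde{A}(x)(\mathds{1}-p)$ for all $x$ is equivalent to $A(xp)=A(x)(\mathds{1}-p)$ for all $x$, with the \emph{same} projection $p$. This is the same bookkeeping as in Remark \ref{Rem:KMSproj}: unfolding $\widetilde{A}(x)=\rho^{\frac{1}{4}}A(\rho^{-\frac{1}{4}}x\rho^{-\frac{1}{4}})\rho^{\frac{1}{4}}$, multiplying the identity on both sides by $\rho^{-\frac{1}{4}}$, and substituting $y:=\rho^{-\frac{1}{4}}x\rho^{-\frac{1}{4}}$ (which ranges over all of $\M$) rewrites the $\widetilde{A}$-condition as $A(y\,\sigma_{-\frac{i}{4}}(p))=A(y)\,\sigma_{-\frac{i}{4}}(\mathds{1}-p)$ for all $y\in\M$. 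Applying the $\ast$-automorphism $\sigma_{\frac{i}{4}}$ to both sides and using that $A$ commutes with the modular group together with $\sigma_{\frac{i}{4}}\circ\sigma_{-\frac{i}{4}}=\mathrm{id}$ and multiplicativity of $\sigma_{\frac{i}{4}}$, the left-hand side becomes $A(\sigma_{\frac{i}{4}}(y)\,p)$ and the right-hand side $A(\sigma_{\frac{i}{4}}(y))(\mathds{1}-p)$; since $\sigma_{\frac{i}{4}}$ is a bijection of $\M$, writing $z:=\sigma_{\frac{i}{4}}(y)$ this is precisely $A(zp)=A(z)(\mathds{1}-p)$ for all $z\in\M$. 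Every step is reversible, which yields the equivalence.

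All the real content sits in Theorem \ref{Thm:bipartite}; the only thing to get right here is the modular-group bookkeeping in the final step. The point worth emphasizing is that it is exactly GNS symmetry --- equivalently, commutation with $\sigma_z$ --- that allows the honest projection $p$ to survive the passage between $\widetilde{A}$ and $A$: for a merely KMS symmetric $A$ one would only recover the idempotent $\sigma_{-\frac{i}{4}}(p)$, which need not be self-adjoint, and this is precisely why the bipartiteness criterion in Theorem \ref{Thm:bipartite} must be phrased through $\widetilde{A}$ rather than $A$.
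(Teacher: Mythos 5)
Your argument is correct and follows essentially the same route as the paper: GNS symmetry gives commutation with the modular group and hence KMS symmetry, Theorem \ref{Thm:bipartite} applies, and the twist $\sigma_{-\frac{i}{4}}$ is then removed by conjugating with the modular group, exactly as in the paper's (terser) proof. One small terminological slip: $\sigma_{\frac{i}{4}}$ is a multiplicative linear bijection but not a $\ast$-automorphism (for non-real $z$ one has $\sigma_z(x)^{\ast}=\sigma_{\bar z}(x^{\ast})$); this does not affect your argument, which only uses multiplicativity and bijectivity.
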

\begin{proof}
A GNS symmetric map is KMS symmetric, so the previous result holds. We just need to translate the second condition into a statement about $A$. We have $\widetilde{A}(x(\mathds{1}-p)) = \widetilde{A}(x)p$. On the level of $A$ this equality translates to $A(x \sigma_{-\frac{i}{4}}(p)) = A(x) (\mathds{1} - \sigma_{-\frac{i}{4}}(p))$. Since $A$ commutes with the modular group, a computation similar to the one preceding Proposition \ref{prop:Laplacianconnected} shows that $A(xp) = A(x)(\mathds{1}-p)$.
\end{proof}
We also get an equivalent condition phrased in terms of the corresponding operator systems, analogous to Theorem \ref{Thm:mainequivalence}.
\begin{prop}
Let $\mathcal{G}$ be a bipartite quantum graph. Then there exists a non-trivial projection $p \in \M$ such that the corresponding operator system $S$ satisfies $pSp = (\mathds{1} - p)S(\mathds{1} - p) = \{0\}$.
\end{prop}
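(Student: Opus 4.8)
The plan is to convert bipartiteness, which is a statement about the KMS implementation $\widetilde{A}$ of the quantum adjacency matrix, into a statement about the bimodule $S$, by feeding the Kraus description of $\widetilde{A}$ from Proposition \ref{prop:qadjacencyfrombimodule} into a positivity argument. This is exactly the mechanism used in the proofs of Propositions \ref{Prop:irredbimod} and \ref{Prop:irredbimod nT}.

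First I would use the characterization of bipartiteness in the remark preceding Lemma \ref{Lem:bipartite} (equivalently, Lemma \ref{Lem:bipartite} applied to the completely positive map $\Phi = \widetilde{A}$) to produce a non-trivial projection $p \in \M$ with $p_1 := p$, $p_2 := \mathds{1} - p$ satisfying $\widetilde{A}(p_i) p_i = 0$ for $i = 1, 2$. Multiplying on the left by $p_i$ gives $p_i \widetilde{A}(p_i) p_i = 0$ as well. Next, fix the representation $\M \simeq \bigoplus_{a=1}^d M_{n_a} \subset B(H)$, $H = \bigoplus_{a=1}^d \mathbb{C}^{n_a}$, and let $S = \bigoplus_{a,b} S_{ab}$ be the corresponding $\M'$-bimodule; since $\mathcal{G}$ is undirected, $S$ is the operator system in the statement. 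As observed in the proof of Proposition \ref{Prop:irredbimod nT}, Proposition \ref{prop:qadjacencyfrombimodule} lets us write $\widetilde{A}(x) = \sum_\alpha V_\alpha x V_\alpha^\ast$, where $\{V_\alpha\}$ is the union of orthonormal bases of the blocks $S_{ab}$ and in particular spans $S$.

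Writing $p = \bigoplus_a p_a$ and noting $p V_\alpha p = p_b V_\alpha p_a$ whenever $V_\alpha \in S_{ab}$, one then computes
\[
0 = p\,\widetilde{A}(p)\,p = \sum_\alpha p V_\alpha p V_\alpha^\ast p = \sum_\alpha (p V_\alpha p)(p V_\alpha p)^\ast .
\]
Each summand is positive, so $p V_\alpha p = 0$ for every $\alpha$; since the $V_\alpha$ span $S$, this gives $pSp = \{0\}$. Running the identical argument with $\mathds{1} - p$ in place of $p$ yields $(\mathds{1} - p)S(\mathds{1} - p) = \{0\}$, which is the assertion.

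I do not expect a real obstacle. The only point requiring care is the input that the Kraus operators of $\widetilde{A}$ can be chosen to form an orthonormal basis of the bimodule $S$ — this is precisely Proposition \ref{prop:qadjacencyfrombimodule} and was already exploited in Propositions \ref{Prop:irredbimod} and \ref{Prop:irredbimod nT} — after which the block-diagonal shape of $p$ forces each compressed Kraus term to vanish by positivity. One should also confirm that the projection furnished by bipartiteness genuinely lies in $\M$ (it is the image of a minimal projection of $\mathbb{C}^2$ under the injective unital $\ast$-homomorphism $\mathbb{C}^2 \to \M$ defining the homomorphism onto $K_2$), so that the compressions $p V_\alpha p$ respect the block decomposition of $S$ as claimed.
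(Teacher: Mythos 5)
Your proof is correct and follows essentially the same route as the paper's: both feed the fact that the Kraus operators of $\widetilde{A}$ form a spanning set of $S$ (Proposition \ref{prop:qadjacencyfrombimodule}, as in Proposition \ref{Prop:irredbimod nT}) into the bipartiteness condition of Lemma \ref{Lem:bipartite}. The only difference is in the last step: the paper deduces $Sp=(\mathds{1}-p)S$ from $\widetilde{A}(pxp)=(\mathds{1}-p)\widetilde{A}(x)(\mathds{1}-p)$ via uniqueness of the span of Kraus operators, whereas you extract $pV_\alpha p=0$ directly from $p\,\widetilde{A}(p)\,p=0$ by positivity of $\sum_\alpha (pV_\alpha p)(pV_\alpha p)^{\ast}$; both are valid.
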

\begin{proof}
By Proposition \ref{prop:qadjacencyfrombimodule} the operator system $S$ is spanned by the Kraus operators of $\widetilde{A}$ (just like in the proof of Proposition \ref{Prop:irredbimod nT}). As $\mathcal{G}$ is bipartite, $\widetilde{A}$ satisfies $\widetilde{A}(xp) = \widetilde{A}(x) (\mathds{1}-p)$, so $\widetilde{A}(pxp) = (\mathds{1} - p)\widetilde{A}(x) (\mathds{1}-p)$. It follows that the span of the Kraus operators, i.e. $S$, satisfies $(\mathds{1} - p)S = Sp$, so $pSp = 0$ and $(\mathds{1}-p)S(\mathds{1}-p) = 0$.
\end{proof}

\subsection{Operator norm of regular quantum adjacency matrices}
A case left open in the work of Matsuda (see \cite[Theorem 2.11, Corollary 2.12]{Matsuda2}) is whether the operator norm of a $d$-regular quantum graph is equal to $d$. We prove here that indeed it is true.
\begin{prop}
Let $\G := (\M, \psi, A)$ be a $d$-regular quantum graph, i.e. $A\mathds{1} = d\mathds{1}$ and $A^{\ast}\mathds{1} = d\mathds{1}$, where $A^{\ast}$ is the GNS adjoint and $d\in [0,\infty)$. Then $\|A\|=d$, where $\|A\|$ is the operator norm on the GNS space.
\end{prop}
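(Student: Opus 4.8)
The plan is to prove $\|A\| = d$ by establishing the two inequalities $\|A\| \geqslant d$ and $\|A\| \leqslant d$ separately. The lower bound is immediate: since $A\mathds{1} = d\mathds{1}$ and $d \geqslant 0$, evaluating the operator norm on the vector $\mathds{1}$ gives $\|A\| \geqslant \|A\mathds{1}\|_2 / \|\mathds{1}\|_2 = d$.

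For the upper bound I would use complete positivity of $A$ — available because $\G$ is real, by the Remark following Proposition \ref{prop:Choiproperties} — together with the Kadison--Schwarz inequality in its non-unital form, exactly as already invoked in the proof of Lemma \ref{Lem:irred}. This gives, for every $x \in \M$,
\[
A(x)^{\ast} A(x) \leqslant \|A(\mathds{1})\|\, A(x^{\ast}x) = d\, A(x^{\ast}x),
\]
since $A(\mathds{1}) = d\mathds{1}$. Next I would apply the positive functional $\psi$ to both sides and rewrite the right-hand side using the second regularity hypothesis: the condition $A^{\ast}\mathds{1} = d\mathds{1}$ (with $A^{\ast}$ the GNS adjoint and $d$ real) is precisely the statement that $\psi(A(y)) = d\,\psi(y)$ for all $y \in \M$, so $\psi(A(x^{\ast}x)) = d\,\psi(x^{\ast}x) = d\|x\|_2^2$. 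Combining these,
\[
\|Ax\|_2^2 = \psi\big(A(x)^{\ast}A(x)\big) \leqslant d\,\psi\big(A(x^{\ast}x)\big) = d^2\|x\|_2^2,
\]
which yields $\|A\| \leqslant d$, and hence $\|A\| = d$.

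I do not expect a genuine obstacle in this argument; the only subtlety is the bookkeeping around the non-unital Kadison--Schwarz estimate — the factor $\|A(\mathds{1})\| = d$ is genuinely needed since $A$ need not be unital — and observing that the computation is uniform in $d$, so it also covers the degenerate case $d = 0$, where the same chain forces $\|Ax\|_2 = 0$ for all $x$, i.e. $A = 0 = \|A\|$.
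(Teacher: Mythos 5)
Your proof is correct and follows essentially the same route as the paper: the upper bound $\|A\|\leqslant d$ via the non-unital Kadison--Schwarz inequality $A(x)^{\ast}A(x)\leqslant \|A(\mathds{1})\|A(x^{\ast}x)$ followed by applying $\psi$ and using $A^{\ast}\mathds{1}=d\mathds{1}$ is exactly the paper's argument. Your explicit treatment of the lower bound and the $d=0$ case is a minor addition the paper leaves implicit.
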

\begin{proof}
It suffices to show that $\|A\| \leqslant d$. Pick an element $x\in \M$ with $\psi(x^{\ast}x)\leqslant 1$ and we would like to show that $\|Ax\|^2 = \psi((A(x))^{\ast} A(x)) \leqslant d^2$.

Using the Kadison-Schwarz inequality, we get $(A(x))^{\ast} A(x) \leqslant \|A(\mathds{1})\| A(x^{\ast}x) = d A(x^{\ast}x)$. It follows that 
\begin{align*}
\psi((A(x))^{\ast} A(x)) &\leqslant d \psi(A(x^{\ast}x)) = d\langle \mathds{1}, A(x^{\ast}x)\rangle \\
&=d \langle A^{\ast}\mathds{1}, x^{\ast}x\rangle = d^2 \langle \mathds{1}, x^{\ast}x\rangle = d^2 \psi(x^{\ast}x) \\
&\leqslant d^2
\end{align*}
\end{proof}

\section{Acknowledgment}

This research was supported by the Deutsche Forschungsgemeinschaft (DFG, German Research Foundation) under Germany's Excellence Strategy – EXC 2044 – 390685587, Mathematics M\"{u}nster – Dynamics – Geometry – Structure, the Deutsche Forschungsgemeinschaft (DFG, German Research Foundation) – Project-ID 427320536 – SFB 1442, and ERC Advanced Grant 834267 – AMAREC, by the University of M\"{u}nster, Germany in the framework of the \textit{WiRe - Women in Research Fellowship Programme} and the National Science Center, Poland (NCN) grant no. 2021/43/D/ST1/01446. The project is co-financed by the Polish National Agency for Academic Exchange within the Polish Returns Programme. The second and third named author would also like to thank the Isaac Newton Institute for Mathematical Sciences, Cambridge, for support and hospitality during the programme \textit{Quantum Information, Quantum Groups and Operator Algebras}, where their collaborative work on this paper was initiated. This work was supported by EPSRC grant EP/Z000580/1. \includegraphics[scale=0.25]{logoNAWA.png}

%\nocite{*}
\bibliographystyle{amsalpha}
\bibliography{References_new}

%\bibliographystyle{plain} 
%\bibliography{References}

\end{document}